\numberwithin{equation}{section}
\def\uh{\upharpoonright}
\begin{document}
\title[Foundations of online structure theory II: The operator
approach]{\texorpdfstring{Foundations of online structure theory II:\\The operator approach}{Foundations of online structure theory II: The operator approach}}

\author{Rod Downey\rsuper{a}}
\address{\lsuper{a}Victoria University of Wellington}
\email{Rod.Downey@msor.vuw.ac.nz}

\author{Alexander Melnikov\rsuper{b}}
\address{\lsuper{b}Massey University}
\email{alexander.g.melnikov@gmail.com}

\author{Keng Meng Ng\rsuper{c}}
\address{\lsuper{c}Nanyang Technological University}
\email{kmng@ntu.edu.sg}

\thanks{The first two authors were partially supported by Marsden Fund of New Zealand. The second author was also partially supported by Rutherford Discovery Fellowship.  The third author is partially supported by the grant MOE2015-T2-2-055 and RG131/17.}
\subjclass[2010]{Primary 	03D45, 03C57. Secondary 03D75, 03D80}

\begin{abstract}
We introduce a framework for online structure theory. Our approach generalises notions arising independently in several areas of computability theory and complexity theory. We suggest a unifying approach using operators where we allow the input to be a countable object of an arbitrary complexity.
We give a new framework which (i) ties online algorithms
with computable analysis, (ii) shows how to use modifications of notions from
computable analysis, such as Weihrauch reducibility,
 to analyse \emph{finite} but uniform
 combinatorics, (iii) show how to finitize reverse
mathematics to suggest a fine structure of finite analogs of
infinite combinatorial problems, and (iv) see how similar ideas
can be amalgamated from areas such as EX-learning, computable analysis,
distributed computing and the like. One of the key ideas is that online algorithms
can be viewed as a sub-area of computable analysis.
Conversely, we also get an enrichment of computable analysis
from classical online algorithms.
\end{abstract}

\maketitle


\section{Introduction}
\label{intro}

\subsection{Our Goal}
Imagine you are tasked with putting objects of differing sizes into bins of a fixed size. Your goal is to minimize the number of bins you need.
This is the famous {\sc Bin Packing} problem  which we know is NP
complete (see Karp \cite{Karp}). But imagine that we change the rules and
you are only given the objects one at a time and you must choose which bin to put the object into before being given
the next object. You are in an \emph{online} situation and this is the
{\sc Online Bin Packing} problem. The ``first fit'' method is
well-known to give a 2-approximation algorithm for this problem (definitions
given in detail below).  Alternatively imagine you are a scheduler, and your goal is to
schedule requests within a computer for memory allocation amongst users. Again
you are in an online situation, but here you might want to change the order of
allocation depending on priorities of the requests.
Or from algorithmic randomness, you have a (computable) KC-set of
requests of the form $(2^{-n_i},\sigma_i)$ with
$\sum_{i=1}^\infty 2^{-n_i}\le 1$, and need to build a prefix-free
Turing machine
$M$ with strings $\tau_i$ such that $|\tau_i|=n_i$ and $M(\tau_i)=\sigma_i$.
Then the proof from e.g. Downey and Hirschfeldt
\cite{DH}, Theorem 3.6.1 is online in the sense that for each request at step
$i$ we generate the string $\tau_i$.

Thus an online algorithm is one which acts on a input which is given piece by
piece in  a serial fashion. In the case where the input is finite, Karp
\cite{Karp2} suggested this as a sequence of ``requests'' $r_1,r_2,\dots$
with the algorithm $f$ specifying an action $f(r_1), f(r_1r_2),\dots$.
The natural model for this would be a database where a request would be
an update.
Note that this is quite distinct from the \emph{offline} version
where (in the finite case) the whole input is known in advance.
It is important to realize that in practical online algorithms
arising in computer science, the action needs to be specified
before the next request is given. Occasionally this is
varied with a \emph{lookahead} or \emph{delay} where
typically we might get $k$ further bits of  input
so $r_1,\dots,r_{n+k}$ determines the next action\footnote{As we will see below, it is possible to generalize this further
perhaps to have delay or lookahead $g(n)$ for action $n$
with $g$ some function of $n$ (or even $r_n$) but this is
not what happens in practice.
Most natural examples work from $r_n$ to define $f(r_n)$ and
indeed if $r_n$ is a structure generated by $\{1,\dots, n\}$ of some kind,
then $f(r_n)$ is some value $h(n)$.}.


A brief thought on this will reveal that there are potentially hundreds of situations where we
are dealing with combinatorial algorithms for tasks where we only have partial
evolving information about the input data, or perhaps the data is so
large that we cannot see it in total. This is the reason that there are so
many algorithms for online tasks.
Classical examples include insertion sort, perceptron,
paging, job shop scheduling, ski rental,
navigation with only local understanding,
etc.; see \cite{Albers}.
We will  discuss other possible approaches and the relevant literature in due course.
At this point we only note that books in this area, such as
Albers
\cite{Albers}, all tend to be taxonomies of algorithms.
Our goal is to give a theoretical basis for the theory of online
algorithms and structures which relies on uniform operator approach from computable analysis
and the classical notion of primitive recursion.


\subsection{The Punctual Model}
\label{punctualll}
In \cite{bsl} a related
 project was started aiming at providing a model-theoretical foundation
to this theory. In that paper we focussed upon the intuition that online
decisions in practice have \emph{lack of delay.} That is,
we need to pack the object into some bin immediately, before the next one
is presented to us (in the {\sc Bin Packing} example).
This led to a theory of online structures and algorithms
we generally referred to as \emph{punctual} structure theory.

What is the most general reasonable form of ``punctual''?
In \cite{bsl}  we gave several pages of analysis as to why
there we chose to interpret punctuality
 as \emph{primitive recursive}. That is,
we chose primitive recursive as a \emph{unifying abstraction} of
the notion of lack of delay.

To keep this paper self-contained we will repeat the arguments of \cite{bsl} here, so the reader familiar with \cite{bsl} might choose
to move on to Section~\ref{oper}.

\subsection{Turing computable mathematics}
The general area of \emph{computable} or \emph{effective mathematics} is devoted to
understanding the algorithmic content of mathematics. The roots of the
subject go back to the introduction of non-computable methods into
mathematics at the beginning of the 20th century as discussed in
Metakides and Nerode \cite{MNroots}. Early work concentrated on developing
algorithmic mathematics in algebra, e.g. Grete Hermann \cite{Herm1926},
analysis such as Bishop's constructive analysis,
(implicitly) using algorithmic methods to understand randomness
(Borel \cite{Bo}, von Mises \cite{vM}, Church \cite{Ch}),
understanding effective procedures in finitely presented groups
such as Dehn \cite{De}, and most notably Hilbert's programme
seeking to give a decision procedure for first order logic.
We know all of these historical roots led to the development of,
for example, computability theory,
complexity theory, and algorithmic randomness (see e.g.~Downey \cite{DoT}).
The modern version of effective mathematics utilizes the tools developed
in these areas, as well as classical tools in algebra, analysis, etc.~to calibrate the algorithmic content of many areas of mathematics.

The standard model for such investigations is a (Turing) computable presentation
of a structure.
By this we mean a coding of the structure with universe ${\mathbb N}$,
and the relations and functions coded Turing computably.
For example, a computable presentation of  a group would be
either a finite group, or one where the universe was considered as
${\mathbb N}$ and the group operation was represented as a (Turing) computable function.
Note that this framework uses the general notion of a Turing computable function. In particular,
we put no resource bound on our computation.

\subsection{Online  combinatorics}
A hallmark of the majority of algorithms on finite structures
is that the algorithm ``knows all about the structure''.
In other words, the whole structure is given to the algorithm at once. For example,
when a complexity theorist talks about the  Hamiltonian path problem, they have in mind algorithms
that given a description of a finite graph (say, a matrix-presentation of it) outputs such a path.
This is sometimes not
true for large data sets, and several {\sc Logspace} algorithms, but
we are using this to refer to those,  students would learn
in a basic algorithm course. What happens to such algorithms if the graph is not given at once, but rather
is given to us step-by-step and vertex-by-vertex? This situation is an abstraction to an ``online" computation in which  the input data is  too massive to be given as an input at once.
Now, as seen in the introduction, there are \emph{many} problems in computer science where
we can safely assume that the universe is infinite  and thus we need an online algorithm. For example, a
scheduler which assigns users to access shared memory
is a classic example.

 In the ``online" setup the situation becomes quite
a bit harder. Consider the following example.
Every tree is 2-colourable, but to achieve this colouring
you need to know the whole of the tree. Suppose we are given a vast tree
 one vertex at a time, so that $G=\cup_s G_s$,
an \emph{online} presentation of $G$.
When we give you the vertex $v$ we promise to tell you all of the
vertices given so far to which $v$ is joined; that is the induced subtree
of $v_1,\dots,v_s$.
Your goal is to colour the vertex $v_s$, before we give you
$v_{s+1}$.
We  are in an \emph{online}
situation. For  a tree
with $n$ vertices,
the sharp lower bound is  $O(\log n)$ many colours.
It follows that there are
online  presentations of infinite (computable) trees which
cannot be online coloured with any
finite number of colours.
We see that switching to the online case affects not only the running time, but also the
best solution that we can hope for.
We remark that online algorithms can be quite complex. 

Beginning in the 1980's
there has been quite a lot of work on online infinite combinatorics, particularly
by Kierstead, Trotter, Remmel and others (\cite{KieTrans,hand,KT,LST,Remmel86}). Some results
were quite surprising. For example, Dilworth's theorem says that a
partial ordering of width $k$ can be decomposed into
$k$ chains. Szemeredi and others showed that there is a
computable partial ordering of width $k$ that cannot be decomposed into
$k$
computable chains. But in 1981, Kierstead proved that
there is an online algorithm that will decompose
any online presentation of a computable partial ordering
into $\frac{5^k-1}{4}$ many (computable) chains.
Only in 2009 was this result improved
by Bosek and Krawczyk who demonstrated that it can be done with
$k^{14\log k}$ many chains.
In the case of finite structures most work
comes from  comparing
offline vs online performance.
In this area, the typical setting is  to
build some kind of function which is measured
relative to some size, and the goal of online
algorithm design is to improve what is called the
\emph{Competitive Performance Ratio} of online divided by offline. For example, first fit
gives a competitive ratio of 2 for the classical
{\sc Bin Packing} problem (see Garey and Johnson \cite{GJ}).

\subsection{Online vs.~Turing computable}

The notion of an ``online'' algorithm in the results mentioned above is rather specific. One may complain that, rather than saying that we must make a decision before the next vertex shows up, it is fine to wait for a bit more of a graph to be shown to us. But \emph{how much more} exactly?  Maybe we can wait for $17$ more vertices to show up before we make a decision. Perhaps, at stage $s$ we could ask for  $\log(s)$ more vertices, etc. It is not hard to see that various answers to this question will lead to a proper hierarchy --  rather, a zoo -- of ``online'' computability notions.
It is natural to ask:
\begin{center}
\emph{What is the most general notion of an online algorithm?}
\end{center}

Understanding  the \emph{online content} of mathematics so far
has no general theory, there are only algorithms or proofs that
no algorithm  exists. Note that the lack of theory for online mathematics
stands in stark contrast with the infinite off-line case described by the  computable structure theory~\cite{AshKn, ErGon}.
However, as we noted above, computable structure theory relies on
the most general notion of a computable process that we know today --  a Turing computable process. Turing computability provides us with many tools, such as the universal Turing machine and the Recursion Theorem, that are useful in \emph{proving theorems about algorithms}. However, Turing computability in its full generality is not an adequate model in the online situation, because Turing computable algorithms can use an unbounded search. For instance, recall the example in which we had to online  colour a tree.
A Turing computable algorithm would just \emph{wait} until a node gets connected to the root of the tree via a path and then will make a decision.
There is no  \emph{a  priori}
bound on how long it may take for the path to be revealed, but a Turing computable algorithm does not care.
More importantly, Turing computability fails to capture the ``impatient'' nature of an online algorithm which has to make a decision ``now''.

\subsection{Our goal, revisited}
Recall that our goal is to give a general abstract foundation
for online algorithms.
As we will soon see,
our approach is based on  one natural interpretation of ``online'' involving
\emph{primitive recursive} structures.

In \cite{bsl}, using some of  the techniques and intuition coming from the mentioned above (Turing) computable structure theory~\cite{AshKn,ErGon}
we developed a theory contrasting and comparing classical computable structure theory
with an online ``punctual'' framework.
In \cite{bsl}, we discussed the following models.





\subsection{The  models} We will concentrate on infinite structures. Still to do is
to develop an appropriate  model theory for
online finite structures as asked for by Downey and
McCartin \cite{DM}.
In Section~9 of \cite{bsl}, we foreshadowed the developments of the present paper
which works some way towards addressing finite online model theory.

In its most general formulation, an online algorithm would act
on a structure ${\mathcal A}$ given in stages $f(1), f(2), \dots$,
where $f$ is a computable function representing timestamps.
At stage $f(n)$ we would enumerate $n$ into the partial structure $A_{f(n)}$
and give complete information about how $n$ relates to $\{0,\dots,n-1\}$.

Now the question is: \emph{What kinds of structures and time functions should
be allowed?} Different choices will result in different theories.
Our goal is to give a general setting that also reflects the
common online structures encountered.
We examine some approaches from the literature:





\subsubsection{Automatic structures}
Khoussainov and Nerode~\cite{KhoussainovNerode95}
initiated a systematic study into automatically presentable algebraic
structures; but these seem quite rare. For example, the additive group of the rationals is not automatic~\cite{tsankov}. The approach via finite automata is highly sensitive to
how we define what we mean by automatic. For example treating a
function as a relation yields quite a different kind of
automatic presentation. See \cite{WordProcessing} for an alternate approach to automatic groups.
Although the theory of automatic structures is a beautiful subject, a finite automaton is definitely not a general enough model for an online algorithm.

\subsubsection{Polynomial time computable structures}
 Cenzer and Remmel, Grigorieff, Ala\-ev,
and others
\cite{CR-survey98,Grigorieff90,ALA,Alaev-II} studied
polynomial time presentable structures. We omit the formal definitions, but we note that they are sensitive to
how exactly we code the domain. In many common algebraic classes we can show that all Turing computable structures have  polynomial-time computable copies.
 One attractive result is
that every computably presentable linear ordering
has a copy in linear time and logarithmic space~\cite{Grigorieff90}.
Similar results hold for  broad subclasses of Boolean algebras~\cite{cerem}, some commutative groups~\cite{ceremab,ceremdo}, and some other structures~\cite{cerem}.

\subsubsection{Fully primitive recursive structures}
As was noted in \cite{KMN2}, many known proofs from
polynomial time structure theory
 (e.g., \cite{cerem,ceremab,ceremdo,Grigorieff90})  are  focused on  making the operations and relations on the structure \emph{primitive recursive}, and then observing that the presentation that we obtain is in fact polynomial-time.

The restricted Church-Turing thesis for primitive recursive functions says that a function is primitive recursive iff  it can be described by an algorithm that uses only bounded loops. For example, we need to eliminate all instances of  WHILE $\ldots$ DO,  REPEAT $\ldots$ UNTIL, and GOTO in a PASCAL-like language.



It is not difficult to construct an example of a structure which is primitive recursive but does not have a polynomial-time presentation; see the introduction of \cite{cerem} for one such example.
  Nonetheless,  primitive recursion plays a rather important intermediate role in transforming (Turing) computable structures into polynomial-time structures.
Furthermore,
to illustrate that a structure has no polynomial time copy, it is sometimes easiest to argue that it does not even have a copy with primitive recursive operations, see e.g.~\cite{ceremab}.
In \cite{bsl} the intuition above
led us to systematically investigate into those  structures that admit a presentation with primitive recursive operations, as defined below. Kalimullin, Melnikov, and Ng~\cite{KMN2} proposed that an ``online'' structure must minimally satisfy:

  \begin{defiC}[\cite{KMN2}]\label{maindef}
   A countable structure is \emph{fully primitive recursive} (fpr) if its domain is $\mathbb{N}$ and the operations and predicates of the structure are (uniformly) primitive recursive.
 \end{defiC}
\noindent   The main intuition is that we need to  define more of the structure  ``without delay''.
Here ``delay'' really means an instance of a truly unbounded search.  We informally call fpr structures  \emph{punctually computable}. We could also agree that all finite structures are also punctual by allowing initial segments of $\mathbb{N}$ to serve as their domains\footnote{Although the definition above is not restricted to finite languages, we will never consider infinite languages in the paper.}.

\begin{rem}
The word ``fully'' in ``fully primitive recursive'' emphasises that the domain must be the whole of $\mathbb{N}$ and not merely a primitive recursive subset of $\mathbb{N}$; these are provably non-equivalent assumptions. If the domain could be merely a primitive recursive subset of $\mathbb{N}$ then we can  delay elements from appearing in the structure; this way one can easily  show that each Turing computable graph has a primitive recursive copy (\cite{bsl}). We decided that structures in which elements can be delayed are not really online.
 \end{rem}

Our goal is to give a most general setting that also reflects the
common online structures encountered.
From a logician's point of view, where do computable structures come from?
One of the \emph{fundamental results} of computable structure theory
is that:

\centerline{\emph{A decidable theory has a decidable model\footnote{Recall that a complete, first-order theory in a computable languadge is decidable if the collection of  all G$\rm\ddot{o}$del codes of its sentences forms a computable set. A model  upon the domain $\mathbb{N}$ is decidable if there is a Turing computable algorithm which, given a first-order formula with parameters from the domain of the structure, can decide whether the formula holds in the structure.}}.}

The proof of this
elementary fact is to observe that the Henkin construction
is effective, in that if the theory is decidable then
the constructed model is decidable as a model.
Many standard computable structures come from decidable theories.

Most natural decidable theories  are elementary decidable in
that the decision procedures are relatively low level. We have to go out of
our way to have natural decidable theories
whose decision procedures are not primitive recursive.
In \cite{bsl} we observed  that a theory with a
primitive recursive decision procedure has a
model which is decidable in a primitive recursive sense.

\subsubsection{The upshot}
Thus in \cite{bsl}, we chose fully primitive recursive structures
as our central model. Primitive recursiveness gives a useful
\emph{unifying abstraction} to
computational processes for structures with computationally bounded
presentations. In such investigations we only care that there is \emph{some} bound.
Furthermore, these models arise quite naturally through
standard decision procedures.

In \cite{bsl}, we also noted that many results we stated in terms of primitive recursion, can likely be pushed to polynomial time
structures. Furthermore, some of our counterexamples can in fact be stated in terms of \emph{any} class with sufficiently nice closure properties; e.g., for a class of total computable functions having a uniformly computable enumeration and closed under composition and primitive recursion.
 However, this does not mean that our choice of primitive recursive algorithms as a central model
is fairly arbitrary.  The mentioned above generalisation to a class of total functions can be viewed as  a version of the \emph{subrecursive relativisation} of primitive recursion.
The study of relativised versions of our results is interesting on its own right, but it is not really beyond the primitive recursive model.
Kalimullin, Melnikov and Montalb{\' a}n (in progress) have recently announced a number of unexpected results connecting relativised primitive recursive presentations
with syntax in the spirit of Ash and Knight~\cite{AshKn}. Also, as we see in the present paper, an expert in computable structure theory would know that relativisation is tightly connected with uniformity. Generalisations to polynomial time classes seem to require significant effort in some instances.
 Alaev~\cite{ALA, Alaev-II} has recently initiated a research program focused on extending these ideas to polynomial time algebra. Dealing with polynomial time algorithms requires specific techniques and counting combinatorics; this is something we do not have to worry in our more ``relaxed'' model.
 In contrast with, e.g., automatic algorithms or polynomial-time algorithms,  there is a highly convenient and clear version of  Turing-Church thesis for primitive recursive functions (see above). We will use the thesis throughout the article without explicit reference. It will allow to simplify our proofs and proof sketches.
Irrelevant counting combinatorics is stripped from such proofs, thus emphasising the effects related to
the existence of a bound in principle (rather than specifying the bound). These effects are far more significant than it may seem at first glance.

Models such as automata-based
structure theory~\cite{KhoussainovNerode95, Hod, Hod1},
are highly sensitive to
presentations of the structures. For example treating the algorithms as
generated by transducers yields a completely different theory to that
obtained by treating functions as relations, as can be seen by comparing the approach
of Khoussainov and Nerode~\cite{KhoussainovNerode95}, with that of Epstein et al.~\cite{WordProcessing}.
Also it would seem that although we can incorporate
automatic  processes in our theories, they are really not general
enough for online algorithms in general. Similarly, polynomial time structures
such as Cenzer and Remmel, Grigorieff, Ala\-ev,
and others
\cite{ALA,Alaev-II,ASel,CR-survey98,Grigorieff90} are rather presentation dependent.
Finally, primitive recursive has a nice Church-Turing thesis,
in that it models computable processes without unbounded loops.

\section{The Uniform/Operator  Model}
\label{oper}

Whilst the \cite{bsl} model is a natural model, as we observed in Section~9
of that paper, there are aspects of online combinatorics which are
\emph{not} covered by the model.

Imagine we need to build a colouring
of a graph $G$ which is given online. Thus, in the very simplest case, we would
be given the graph $G=\lim_s G_s$, where $G_s$ has $s$ vertices. When
the vertex $s$ is introduced, we are also given at the same time
precisely which vertices amongst $\{1,\dots,s-1\}$ has an edge with $s$ (and this cannot change later).
(This is the ``request set'' in Karp's paper.)
Our task is to colour $s$  so that no two vertices which are
connected have the same colour, before the opponent presents us with $G_{s+1}.$

Although in practice the task will be finite, since we have no idea how large
the graph is, we can construe this as an infinite process.
Imagine this online colouring of a finite graph 
as an infinite process where  we need to colour the
whole of an infinite graph $G$ given to us as incremental induced subgraphs. We can think of each possible
version of $G$ as being a path through an infinite tree
of possibilities. Each node $\sigma$ of length $s$ of the tree will represent some graph
$G_\sigma$ with $s$ vertices, and
if $\sigma\prec \sigma'$ then $G_\sigma$ is the subgraph of $G_{\sigma'}$
induced by vertices $\{1,\dots,s\}$.
Note that there are only primitively recursively many non-isomorphic graphs with $s$ vertices\footnote{Meaning that this number is  $u(s)$, where $u$ is primitive recursive.}.

Then this view of an online algorithm differs
from that given in \cite{bsl} for the following core reason:

\begin{quote}
 Although $G$ can be viewed a path on an infinite
primitive recursive tree of possibilities, there is no \emph{a priori} reason that we should only consider a primitive recursive graph $G$. There are continuum many such paths and the online graph colouring problem can be considered for an infinite countable graph of any complexity.
\end{quote}

The reader will quickly realise that the key point about online algorithms
is one of \emph{continuity} or \emph{uniformity}. If we have a colouring
of $G_\sigma$ and we add a new vertex $s+1$, the next $G$ will be one of the
possible extensions $G_\tau$  of $G_\sigma$ with the vertex $s+1$ added. For each such $G_\tau$ the colouring $\chi_{G_\tau}$ must be compatible with the colouring $\chi_{G_\sigma}$ on $G_\sigma$. 

\paragraph*{Computable Analysis.}
The conclusion is that whilst online algorithms appear to be
combinatorial algorithms on finite objects and possibly infinine ones, in fact they should be
formulated as a branch of \emph{computable analysis}.
One of the goals of the present paper is to give such a formulation.
We need to specify what kinds of spaces are of relevance and  what kinds
of operators correspond to online algorithms.
We believe that this view will allow a discourse between the discrete and
the continuous which could prove fruitful. Similar relationships between
the continuous
and the discrete have yielded powerful results such as
in the Furstenberg view of Szemeredi's Theorem. Our unifying
abstraction also means that
computable analysis is shown to be important in finite combinatorics
(see also Avigad~\cite{avi}).
We will also see that our abstraction means that we can relate
the proof theory of finite combinatorics with classical
proof theory, obtaining refinements of result for Reverse Mathematics, and
we can relate the theory of incremental computation
with ideas from computable analysis such as Weihrauch reducibility.
Thus although this is not the most technically difficult paper,
we see it as a conceptual advance showing that many ideas
can be combined into a single unifying abstraction.

\paragraph*{Immediate actions.}
Since we want the action to be immediate, following the
abstraction of \cite{bsl} and for the reasons above, we will also demand that the action
works primitive recursively, or perhaps even running in polynomial time.
Thus, for the example above, the most general online colouring algorithm must satisfy the following two features:

\begin{itemize}

 \item[-] $\chi_{G_\tau}$ must agree with  $\chi_{G_\sigma}$ for $\sigma \prec  \tau$.
 \smallskip
 \item[-] The map $\tau \rightarrow \chi_{G_\tau}$ must be (minimally) \emph{primitive recursive}.
\end{itemize}

These will all soon be made precise and general using representations (i.e, naming systems) for online problems.
In fact, there are at least three possible interpretations of the second clause above. For example, should we allow \emph{lookahead} or \emph{delay} in the computation of $f$? In Lemma~\ref{lem:ptime}
we will prove that all three potential definitions are equivalent up to a primitive recursive change of notation, and therefore our definition is \emph{robust}.

\subsection{Why primitive recursion?} Should we instead require  the solution to  be polynomial time?
As we have already mentioned above, this notion would be too notationally dependent to be unambiguous. Also, it is well-known that in practice some useful algorithms are (provably) not polynomial-time; yet they seem to perform well enough on most inputs.
It is therefore not even clear if polynomial-time is the right abstraction for efficiency in the online situation.
So  we want our function to be in a nice complexity class but we are not  yet sure what exactly this class should be. It makes sense to develop as much structural ``punctual'' theory as possible and then see how much of it is preserved when we restrict ourselves to some narrow complexity class. And if something fails, we will have a better idea what goes wrong in the worst possible scenaria; e.g., we will compare the positive Theorem~\ref{thm:anal} and the analogous negative result in polynomial-time analysis~\cite{Ko}. Perhaps, it will help to define generic-case online algorithms in the future.

 As with classical
complexity theory, there is usually a \emph{natural} representation for a problem
we are interested in.
 The reader will note that in our definitions below,
the actual representation \emph{does} affect what we will regard as online.
Nonetheless,
one of the main advantages of our rather general primitive recursive approach is that we still can prove
a number of notation-independent results; e.g., the ``robustness" Lemma~\ref{lem:ptime} and the above-mentioned Theorem~\ref{thm:anal}.
Such results focus more on the effects related to online-ness and less
on the pathologies related to a specific choice of representation. Analogous results usually fail if we restrict ourselves to, say, polynomial-time algorithms because passing from one representation to another
can be computationally too hard.

On the other hand, we will also prove several results (e.g., Theorem~\ref{thm:or}) which show that sometimes
\emph{all} pathologies  come from presentation because the only notation-independent online solutions are the trivial ones.
Such results of the second kind will typically hold for polynomial-time or exponential (etc.) algorithms too, and via essentially the same proof. Primitive recursion serves here as a unifying abstraction rather than an idealisation.

We will also see that, modulo subrecursive relativisation, the earlier approach to online algorithms by  Kiersetead, Trotter et al.~\cite{hand,LST,KT} and Borodin and El-Yaniv~\cite{onlinebook} can be viewed as a special case of our framework.
According to this earlier approach, the map $\tau \rightarrow \chi_{G_\tau}$ just needs to be total and does not even have to be computable. So we see that primitive recursion is not that general when compared to some other definitions in the literature.

 Should we perhaps use  (a use-restricted form of)
general Turing computability  in place of primitive recursion?
It is more general, and some analogy of the above-mentioned robustness lemma (Lemma~\ref{lem:ptime}) will still hold.
The key difference here is that it would hold for a completely different reason.
A recursion theorist will be well-aware of how much unbounded search is abused in many such proofs.
For example, we can use compactness of the representation space and \emph{wait} for it to be covered by open sets.
There will perhaps be no bound on how long we will have to wait, but from the point of view of Turing computability it will not make any difference.

However, primitive recursion seems just general enough for many structural results to hold, but often via a different, more subtle argument which takes into account punctuality of our procedure. A fine example of such a theorem will be given in Section~\ref{real}
where we prove an online version of a well-known theorem of Weierstrass. It is very easy to show using a compactness argument that the theorem holds (Turing) computably. But it requires some thought and a completely different argument to see why it holds punctually.

As mentioned above, we realise that this material also has a connection
with \emph{computable} and \emph{feasible analysis}, and also with the
complexity theory for operators in analysis along the lines of
Kawamura and Cook \cite{KC}, Melhorn \cite{Mel},  Ko and Friedman \cite{FK},
and others.
We will also note  connections with reverse mathematics, computational learning theory, and even algorithmic randomness.
We will also see that, in this setting, the finiteness of the objects being given is not an essential restriction. In the online case, finite objects are only revealed one bit at a time, and for all intents and purposes, we may as well treat all inputs as arbitrarily large finite structures. We will prove that under the uniform operator framework, working with arbitrarily large finite structures and
infinite structures are indeed the same for our setting. This allows for example, for a formal approach in which one can study finite combinatorics in reverse mathematics.

\noindent\begin{minipage}{\linewidth}
As a final remark,
we mention that we see this work as an extension of \cite{bsl} in the following
way.

\begin{quote}
\cite{bsl} considered online computation of primitive recursive structures,
with primitive recursive functions. This is akin to
The Turing-Markov \cite{Tur}
view of computable analysis  as effective
processes on the countable field of
\emph{computable} reals. The Gregorczyk-Kleene
 \cite{Gr55}
views, called
type II computability,
which views computable analysis as effective operators acting
on the continuum of \emph{all} reals.
It is also akin to the bifurcation between computable structure theory
and \emph{uniform} computable structure theory.
\end{quote}
\end{minipage}

\section{The main definition}

In the following sections, we will work up to the main definition.
Remember that we want to simultaneously generalize online algorithms
on finite and infinite structures, and in a  general setting
where  the domains might be any kind of structure. Because of this
we will need to tour through \emph{representations} (computational ways of
naming infinite objects), and carefully argue why choices, such as
that of primitive recursion, are made.

\subsection{Representation spaces}

It could be argued that for relational structures we could consider
(isomorphism types of) any
structure $A$ with universe ${\mathbb N}$, and we could consider
$A_n$ to be the induced substructure of $A$ with universe $\{1,\dots,n\}$.
Naturally, we need to assume that this has meaning: and such substructures
exist in all finite cardinalities. Also, if we choose to add function symbols
we would need to only allow a small extension of the structure
based on $\{1,\dots,n\}$. For simplicity, we will stick to relational
structures and use the following terminology.

A class $\mathcal{C}$ of relational structures is called \emph{inductive}
if $A\in {\mathcal C}$ implies $A$ has a \emph{filtration}
$A=\cup_s A_s$
where each $A_n$
is finite, has universe $\{1,\dots,n\}$,
and for all $n'>n$
the substructure induced by $\{1,\dots,n\}$ in $A_{n'}$ is $A_n$.
More generally, for a fixed (Turing) computable function $g$, we say that
$\mathcal{C}$ is $g$-\emph{inductive} if it has a $g$-filtration
meaning that each $A_n$ has universe $\{1,\dots,g(n)\}$.
Here we will sometimes write $O(h(n))$-inductive for the case where $g$ is
$O(h)$. Our language will typically be finite and relational, and $g$ will typically be primitive recursive\footnote{Richard Shore
observed that the punctual case focusses attention upon functions and functional languages, whereas the operator approach seems to tie itself to relational ones. We need some care if the language has function symbols, see \cite{KMM}.}.

We refer to the substructure of $A$ based on $\{1,\dots,n\}$ the substructure
of \emph{height} $h(n)=n$.
In the example discussed above, the height $n$ structures are
the graphs with $n$ vertices. Another example is considered by
 Khoussainov~\cite{KhRandom}
with a height function in his work on random infinite structures.
Natural online structures tend to have  natural height functions.

By abusing notation, we will let
 ${\mathcal C}^{<\omega}$
denote the class of finite substructures of ${\mathcal C}$.
There is also the natural induced topology.
For example, in the graph case this would be compact and have the
totally disconnected topology with basic open sets being the
extensions of graphs of height $n$.

\subsection{Representations}

A \emph{representation} (a naming system)
 of an inductive  class ${\mathcal C}$ of structures
is a (Turing) computable surjective
function 
$\delta:\omega^{<\omega}\to {\mathcal C}^{<\omega}$,
 which acts faithfully in the sense
that $\delta(\sigma)=C_n$ for $|\sigma|=n$ and $h(C_n)=n$,
and if $\sigma\preceq \tau$ then $\delta(\sigma)$ is an induced substructure of
$\delta(\tau).$ Most examples of representations in the literature are witnessed by a primitive recursive
$\delta$. We thus will assume that $\delta$ is primitive recursive throughout.
We can also extend this in the natural way to $g$-filtrations.
Such a $\delta$ induces a map $\overline{\delta}$
from $\omega^\omega\to {\mathcal C},$
namely $\lim\{\delta(\sigma)\mid \sigma\prec x\}$.
We will call $x\in \omega^\omega$ a \emph{name} or a \emph{representation} for $C\in {\mathcal C}$
if $\overline{\delta}(x)=C.$ Note that it is possible
for a   structure $C$ to have a number of different names.

For the time being, we will regard $\overline{\delta}$ as being injective. When it is possible, we will
replace $\omega^{<\omega}$ with $2^{<\omega}$. 
We will consider functions $f:{\mathcal C}_1\to {\mathcal C}_2$
 represented by   functions $F$ acting on representations
$\delta_i: Q_i \rightarrow {\mathcal C}_i$; we of course  require  that $F$ commutes with  $f$ and  $\delta_i$, $i = 0,1$. 

We emphasise that the function $F$ is acting on \emph{strings} which are
\emph{finite objects}. These represent, e.g., graphs. The
 continuity of the action induces a map $\overline{F}$ which is the completion of the finite maps.

\subsection{Online problems} Although our objects of study are not strings, we implicitly identify them with their representations, in accordance with the previous subsection. In particular, if the representation space is compact then our objects can be identified with strings over a finite alphabet. 

Intuitively, to solve a problem  we need to find  a function $f$ which, on input $i$, chooses an admissible solution from the finite set $s(i)$ of ``correct'' solutions.

\begin{defi}
\label{mainn}
A  online problem is a triple $(I, S,  s)$, where $I$ is the space of inputs (i.e.~the filtration) viewed as finite strings in a finite or infinite computable alphabet, $S$ is the space of outputs viewed as finite strings in (perhaps, some other) alphabet,  and $s: I \rightrightarrows S^{<\omega}$ is a (multi-)function which maps  each $\sigma \in I$ to the set $s(\sigma)$ of admissible solutions of $\sigma$ in~$S$. 
\end{defi}
Note that the multi-valued function $s$ does not have to be computable in general.
For instance, for a colouring problem $I$ will be codes for finite graphs and $S$ for finite coloured graphs. Then $s(\sigma)$ will correspond to the collection of all admissible colourings, e.g., such that adjacent vertices are  distinctly coloured.  These colourings will form the space of admissible solutions.

Most natural problems from finite structures will obey the following
convention, which we will consider in this section.
Only in Section \ref{anall} we will consider
more general cases.

\begin{conv}
\label{conv:br}
Unless explicitly mentioned,
 $I$ and $S$ are  compact with a primitive recursive modulus of compactness; i.e., it is primitively recursively branching when viewed as a tree of strings.
Thus, there is a natural primitive recursive
way to transform $I$ into $2^\omega$ (typically not height preserving).
\end{conv}






\subsection{Taking the completion of an online problem} \label{sec:compl}

 A solution $f$ to an online problem $(I,S, s)$ induces  a solution for the
(topological) completion of the initial problem $(I, S, s)$, in the sense that $f$ can be uniquely extended to a functional
$\bar{f} : [I] \rightarrow [S] $. Here $[I]$ consists of infinite strings $\xi$ such that for every $i$, $\xi \upharpoonright i \in I$, and similarly for $[S]$.

In general, in Definition~\ref{mainn} we may also require $\bar{f}$ to satisfy some global property which cannot be always captured by $s$ from Definition~\ref{mainn}. For example, in Section~\ref{sec:punctualstuff}
a solution must be an isomorphism between two presentations of the same infinite graph. In  general, even if at every stage $f(\sigma)$
may be extendable to some isomorphism, the map associated with $\bar{f}$ may fail to be surjective in the limit. Also, in another example in Section~\ref{sec:punctualstuff} we will require our solution to work only if the input is a presentation of some fixed infinite graph, which is also a property of $\bar{f}$ rather than of any finite approximation to it.
In particular, in this case admissibility of $\bar{f}$ cannot be captured by $s$ in Definition~\ref{mainn}; at least not in general.

\begin{conv}\label{conv:gl}
We will refer to such properties of $\bar{f}$ as \emph{global} and will not incorporate them into Definition~\ref{mainn}.
 \end{conv}

\subsection{The definition} Recall Conventions~\ref{conv:br} and \ref{conv:gl}.

\begin{defi} \label{def:maininf0}
A  punctual solution to (a representation of) an online problem $(I, S,  s)$  is a computable function $f: I \rightarrow S$ with the properties:

\begin{description}


\item[(O1)] $f(\sigma) \in s(\sigma)$ for every $\sigma \in I$;

\item[(O2)] If $\sigma\prec \tau$ then
$f(\sigma)\preceq f(\tau)$;

\item[(O3)] $f$ is primitive recursive.
\end{description}
\end{defi}

Condition (O1) says that the output of  $f$ is an admissible solution.  In (O2) we ask for is that each increment of the input yields an
increment in the output,
in the sense that $f(\sigma)$ must be a solution to $\sigma$.

The reader should note that  (O3) is somewhat ambiguous as stated because it may be interpreted in at least two different ways, namely $f$ could be primitive recursive either as a function or as a functional:

\begin{description}
\item[(O3)$'$] the computation of $f(\sigma)$ is based solely on $\sigma$, or

\item[(O3)$''$] the computation of $f(\sigma)$ may ask for an extension $\tau $ of $\sigma$ before it halts.
\end{description}

Indeed, for online computations it would be natural to demand that
we have a primitive recursive timestamp function $g$ and
to compute $f(\sigma)$ we would look at $\sigma' $ of length
$g(|\sigma|)$ extending $\sigma$. In practical computations
lookahead will typically be $g(|\sigma|)=|\sigma|+k$ for some
constant $k$. On the other hand, for a recursion theorist it would be more natural to consider \emph{Turing functionals}
acting on the representation spaces and demand that they are primitive recursive.
By that we mean adding the characteristic function for the infinite string in
the completion of the problem (Section~\ref{sec:compl}) to the primitive recursive scheme of $f$; to be clarified in Subsection \ref{subsec:prf}.
These two general definitions of lookahead (via timestamp and via oracle) are not equivalent when, say,  $I \cong \omega^{<\omega}$.
Thus, we have three natural versions of $(O3)$ which are furthermore provably not equivalent in general.

Luckily, in the next subsection we will prove that, under Convention~\ref{conv:br}, these three versions of the main definition  are equivalent up to a primitive recursive change of notation, and therefore Definition~\ref{def:maininf0} is robust.

\subsection{The robustness lemma}\label{subsec:prf}  As we mentioned above, there are two natural ways of interpreting what it means for $f$ in $(O3)$
to be primitive recursive with a lookahead. We give more details.

In the first definition, we require that it is a Turing functional  that possesses a primitive recursive time-function $t$ which, on every input $\sigma$
outputs the number of steps which $f$ takes to compute $f(\sigma)$. In particular, $t(\sigma)$ bounds  the use of the operator, that is,  the length of $
\tau$ extending $\sigma$ which may be used in the computation of $f(\sigma)$. The length of the output $f(
\sigma)$ will also be bounded by $t(\sigma)$.


The seemingly more general definition of a primitive recursive functional says that,
for each infinite path $x$ through the space of inputs, $f$
is primitive recursive relative to $x=\lim_s \{\sigma\mid
\sigma\prec x\}$. The latter can be formally defined by adding the characteristic function for $x$  to the primitive recursive
schema, and hence would potentially entail that $f(\sigma)$ could be arbitrarily
long for various extensions of $\sigma$.

These two notions are equivalent
in our framework. (Recall Convention~\ref{conv:br}.)

\begin{lem}\label{lem:ptime} For a primitively recursively branching $I$, a Turing functional $f: I \rightarrow S$ possesses
a primitive recursive time-function iff $f$ is a
primitive recursive functional.
Moreover, if $f$ possesses a primitive recursive time-function, then there
is an equivalent online problem $(I',S',s')$
with  a primitive recursive solution
without lookahead.
\end{lem}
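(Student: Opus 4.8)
The plan is to prove the two assertions separately: the stated equivalence of the two notions of lookahead, and then the elimination of lookahead by passing to an equivalent problem. For the equivalence, the direction ``time-function $\Rightarrow$ primitive recursive functional'' is immediate. Given a primitive recursive time-function $t$ and an infinite input $x\in[I]$ presented as its characteristic function, I would compute $f(\sigma)$, for $\sigma\prec x$, by simulating the Turing functional on oracle $x$ for exactly $t(\sigma)$ steps. Since $t(\sigma)$ bounds both the number of steps and the use, the simulation halts with output $f(\sigma)$, and a clocked simulation run for a primitive-recursively bounded number of steps is itself primitive recursive in the oracle. Hence $f$ is a primitive recursive functional, uniformly in $x$.

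The reverse direction ``primitive recursive functional $\Rightarrow$ time-function'' is the heart of the lemma and the \emph{main obstacle}, and it is exactly where primitive recursive branching is indispensable. Over an arbitrary oracle a primitive recursive functional need \emph{not} have a bounded use: for instance $\Phi^{x}(n)=x(x(n))$ queries $x$ at position $x(n)$, which is unbounded when $x$ is. What rescues us is Convention~\ref{conv:br}: since $I$ is primitively recursively branching, there is a primitive recursive $B$ with $x(m)<B(m)$ for every $x\in[I]$ and every $m$, so the oracle ranges over a primitively recursively bounded product. I would then show, by induction on the construction of the primitive recursive functional $\Phi$ (basic functions, the oracle-query function $m\mapsto x(m)$, composition, and primitive recursion), that there is a \emph{single} primitive recursive $u$ such that for every oracle $x$ bounded by $B$ and every $\sigma$, the computation $\Phi^{x}(\sigma)$ queries $x$ only below position $u(\sigma)$, halts within $u(\sigma)$ steps, and returns a value $<u(\sigma)$. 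The base cases are clear; in the composition case a query position may itself be a previously computed value, but by the inductive hypothesis that value is already primitive-recursively bounded --- this is precisely where the bound $B$ on oracle values is used, to keep intermediate values bounded (cf. the $x(x(n))$ example) --- and the primitive recursion case accumulates these bounds over a primitive-recursively bounded number of iterations. The resulting $u$ is then a primitive recursive time-function for $f$.

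For the ``moreover'' part I would eliminate lookahead by reparametrizing the input so that each new increment already reveals enough of the old input to run $f$ without lookahead. Replacing $t$ by $T(n)=\max\{t(\sigma):\sigma\in I,\ |\sigma|=n\}$, which is primitive recursive because $I$ is primitively recursively branching (so the $\sigma\in I$ of each length form a primitive recursive finite set), I set checkpoints $M(0)=0$ and $M(k+1)=T(M(k))+1$; these are primitive recursive in $k$. The new input space $I'$ groups the old symbols into blocks: a symbol of $I'$ at position $k$ packages the old symbols in positions $[M(k-1),M(k))$, so that a string of $I'$ of length $k$ codes an old string of length $M(k)$. Each block has primitively recursively bounded length over a bounded alphabet, so $I'$ is again primitively recursively branching and the induced representation $\delta'$ is primitive recursive. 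The no-lookahead solution is $f'(\sigma')=f(\sigma\uh M(k-1))$, where $\sigma$ of length $M(k)$ is the old string coded by $\sigma'$: because $T(M(k-1))<M(k)$, the coded input $\sigma$ already contains everything the time-function $t$ permits $f$ to consult when computing $f(\sigma\uh M(k-1))$, so $f'(\sigma')$ depends on $\sigma'$ alone (condition $(O3)'$), while monotonicity $(O2)$ of $f'$ follows from that of $f$. Defining $S'$ and $s'$ as the correspondingly re-chunked output space and admissibility relation makes $(I',S',s')$ equivalent to $(I,S,s)$: the map above sends lookahead-solutions to no-lookahead solutions, and conversely a no-lookahead solution of $(I',S',s')$ yields a solution of $(I,S,s)$ whose lookahead is bounded by the primitive recursive function $\sigma\mapsto M(|\sigma|+1)$.
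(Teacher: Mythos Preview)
Your proposal is correct and follows essentially the same strategy as the paper: the nontrivial direction is handled by induction on the primitive recursive scheme, using the primitive recursive branching bound to control oracle values (your $x(x(n))$ example nicely illustrates why this is needed), and the ``moreover'' clause is obtained by re-chunking the input tree. The only noteworthy difference is in the reparametrization: the paper simply lets level $n$ of $I'$ correspond to level $T(n)=\max\{t(\sigma):|\sigma|=n\}$ of $I$ and outputs $f(\sigma\uh n)$ there, whereas you iterate the time function into checkpoints $M(k+1)=T(M(k))+1$ and output $f(\sigma\uh M(k-1))$; both work, and the paper's version is slightly leaner since the output space needs no re-chunking.
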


\begin{proof} In a different terminology the proof will appear in~\cite{KMM}. A similar formal argument can be found in the appendix of \cite{bsl}.

Suppose the Turing functional $f$ possesses a primitive recursive time function $t$.
Using $t$ as a universal bound on all the searches which may occur in a computation
 with any oracle $x$ extending $\sigma$,
we can transform the general recursive scheme (augmented with the characteristic function $\chi_x$ for $x$)
into a primitive recursive scheme augmented with $\chi_x$. This implication holds in general, i.e., without any
extra assumption on $I$.

Now, assuming $I$ is primitively recursively branching, suppose $f$ is a primitive recursive functional with functional oracle $g$ in
 the most general relativised sense.

 For the base of induction consider the following cases:  $\Phi^g=o$, $\Phi^g=s$,  $\Phi^g=I^n_m$, and $\Phi^g=g$, where $o,s$ and $I^n_m$ are the standard elementary basic functions (e.g., \cite{Rogers}). The first three cases are evident since they do not refer to $g$, while in the case when $\Phi^g=g$ take $t = b$, where $b$ is the primitive recursive branching of~$I$.
 Take $t(x) = \sum_{i \leq x} b(i)$  to make $t$ monotonically increasing in its input.

The inductive step splits into two different cases depending on whether the last iteration is composition or an instance of primitive recursion.

Suppose it is composition,
   $$\Psi^g(\bar x)=\Phi^g(\Theta_1^g(\bar x),\dots,\Theta_m^g(\bar x)),$$
where  $\Psi,\Theta_1,\dots,\Theta_m$  are primitive recursive operators with  primitive recursive time bounds $t_0 ,t_1,\dots, t_m$. As usual, we identify a tuple $\bar x = \langle x_1, \ldots, x_m \rangle$ with its primitive recursive code.
  The time functions $t_0 ,t_1,\dots, t_m$ can be assumed monotonically increasing in their inputs.

 Define a primitive recursive time bound for $\Psi$ by the rule
$$t(\bar{x}) =   t_0 (\langle t_1(\bar x), \ldots, t_m(\bar x) \rangle) + \sum_i t_i(\bar{x}),$$
which can be rewritten into a primitive recursive schema using the standard techniques. Now suppose that $\Psi$ is defined using an instance of primitive recursion, more specifically
   $$
   \Psi^g(\bar x,0)=\Theta^g(\bar x);
   $$
   $$
   \Psi^g(\bar x,y+1)=\Phi^g(\bar x,y,\Psi^g(\bar x,y)),
   $$

\noindent   where  $\Phi$ and $\Theta$ are primitive recursive operators which have corresponding primitive recursive time functions
$t_0$ and $t_1$.  Define $t$ by the rule
 $$
   t(\bar x,0)=t_0(\bar x);
   $$
$$
  t(\bar x,y+1)=t_1(\bar{x}, y, t(\bar{x}, y)) + t(\bar{x}, y);
$$
assuming that $t$ is monotonically increasing in its input this gives the desired upper bound.




\

For the last part of the lemma, recall that
 the tree $I$ is primitive recursively branching. Thus,  we can inductively
form a new
tree $I'$ whose level $n$ nodes are in a (primitive recursive) $1$-$1$ correspondence with the nodes at level  $t(n) = \max\{t(\sigma): |\sigma| =n\}$
in $I$. Then the algorithm $f$ on $I$
works on $I'$ as a strict algorithm. \end{proof}

\









Of course, if $f$ is primitive recursive functional  without lookahead then $f$ can be viewed as simply a primitive recursive function mapping finite strings to finite strings. More formally, we have:

\begin{fact}\label{fact:obT} Suppose $\mathcal{P} = (I, S, s)$ is an online problem. Then the following are equivalent:
\begin{enumerate}
\item $\mathcal{P}$ has a solution witnessed by a primitive recursive function $f$.
\item The completion of $\mathcal{P}$ has a solution witnessed by a primitive recursive operator $\overline{f}$ without lookahead.
\end{enumerate}
\end{fact}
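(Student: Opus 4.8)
The plan is to show that statements (1) and (2) are two descriptions of the same object, so the proof amounts to unwinding the definition of the completion map together with the meaning of ``without lookahead'', and then checking that primitive recursiveness transfers in both directions. The only structural input needed is that a solution satisfying (O2) sends a $\prec$-increasing sequence of inputs to a $\preceq$-increasing sequence of outputs, so that passing to limits is unambiguous, and that ``without lookahead'' for an operator means precisely that the output committed after reading $\sigma$ is a function of $\sigma$ alone --- this is exactly clause (O3)$'$. By Convention~\ref{conv:br} both $I$ and $S$ are primitively recursively branching, so moving between levels of the two trees (which need not be height preserving) involves only a primitive recursive change of bookkeeping.

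First I would prove $(1)\Rightarrow(2)$. Given a primitive recursive function $f\colon I\to S$ satisfying (O1), (O2), and (O3)$'$, define $\overline{f}\colon [I]\to[S]$ by
\[
\overline{f}(\xi)\;=\;\lim_{n}\, f(\xi\upharpoonright n)\;=\;\bigcup_{n} f(\xi\upharpoonright n).
\]
Condition (O2) guarantees that $\{f(\xi\upharpoonright n)\}_n$ is a $\preceq$-chain, so the union is a well-defined element of $[S]$, and (O1) guarantees admissibility at every finite stage. The resulting operator is primitive recursive: to produce the initial segment of $\overline{f}(\xi)$ committed after reading $\xi\upharpoonright n$ one simply evaluates the primitive recursive function $f$ on $\xi\upharpoonright n$, which consults the oracle $\chi_\xi$ only on that initial segment. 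Hence $\overline{f}$ is primitive recursive \emph{without} lookahead, which is clause (2).

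For $(2)\Rightarrow(1)$ I would run the correspondence in reverse. Let $\overline{f}\colon[I]\to[S]$ be a primitive recursive operator without lookahead. By the definition of ``without lookahead'', the portion of the output that $\overline{f}$ commits to after reading an initial segment $\sigma$ of its input depends only on $\sigma$ and not on any extension; call this value $f(\sigma)\in S$. Since $\overline{f}$ is a primitive recursive operator whose oracle is never queried beyond $|\sigma|$, the induced map $f$ is a genuine primitive recursive function on finite strings, giving (O3)$'$. Condition (O2) is inherited from the monotonicity (continuity) of $\overline{f}$ as an operator on the completion, and (O1) holds because each $f(\sigma)$ is the admissible partial output produced by the solution $\overline{f}$. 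Thus $f$ is a primitive recursive solution of $\mathcal{P}$, establishing (1).

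I expect no serious obstacle; as the preceding remark already signals with ``of course'', the content of the fact is essentially definitional. The only point requiring a little care is making the phrase ``without lookahead'' precise at the level of operators --- that the committed output after $\sigma$ is a function of $\sigma$ alone, with $\chi_\xi$ never queried beyond $|\sigma|$ --- and then verifying that the height/level bookkeeping between the trees $I$ and $S$ is primitive recursive. Both follow from Convention~\ref{conv:br} together with the robustness already established in Lemma~\ref{lem:ptime}, of which this fact is the degenerate, most restrictive instance.
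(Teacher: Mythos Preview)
Your proposal is correct and matches the paper's treatment: the paper does not supply a proof of this fact at all, stating it as an immediate reformulation of the sentence preceding it (``if $f$ is primitive recursive functional without lookahead then $f$ can be viewed as simply a primitive recursive function mapping finite strings to finite strings''). Your unwinding of the definitions is exactly the intended justification; the appeal to Lemma~\ref{lem:ptime} at the end is unnecessary here, since ``without lookahead'' already forces the oracle queries to stay within $|\sigma|$, but it does no harm.
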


\subsection{Generalisations and refinements of the main definition.}


It is important to understand that primitive recursion smoothens many difficulties related to notation. In particular,
the robustness lemma from the previous section will typically fail for polynomial-time operators.
Thus,  different interpretations of  $(O3)$ in Definition~\ref{def:maininf0} will potentially lead to different refinements of the main definition to more narrow complexity classes. On the other hand, different versions of relativisation (such as general Turing and sub-recursive) will lead to potentially non-equivalent generalisations of the main definition. We will not develop these topics in too much detail, but some notions and notation introduced in this subsection will be important in the later sections.

\subsubsection{Strict solutions, obT operators, and totality } We may want to stick with a given notation $(I,S,s)$
because changing it may be either inconvenient  or computationally too hard.
If the space $I$ is not primitively recursively branching or not even compact, then Lemma~\ref{lem:ptime} no longer holds.
Thus, in this case the
 most general version of Definition~\ref{def:maininf0} becomes ambiguous. In contrast, Fact~\ref{fact:obT} does not rely on compactness of $I$, let alone its primitive recursive branching,  and therefore the stronger strict version of Definition~\ref{def:maininf0} still makes sense even  for non-compact $I$. Thus, the situation described in Fact~\ref{fact:obT} deserves a special attention.

\begin{defi} \label{def:strictdef}

In the simpler situation that no lookahead is allowed in Definition
\ref{def:maininf0}
 we will call $f$ a
\emph{strict} punctual solution.

\end{defi}


By Fact~\ref{fact:obT},
this situation
can be considered an analog of a classical
 ibT-reduction (to be defined),
but acting on compact spaces with primitive recursive branchings with
the branches of level $n$ being the structures of height $n$, instead of $2^{\omega}$.
Classically, ibT refers to an oracle procedure $\Gamma^B=A$ with the use $\gamma(x)=x$ for all $x$,
and here we are identifying sets with their characteristic functions as usual~\cite{DH}.
ibT functionals and the induced reduction have been studied quite intensively~\cite{C,BL,DH,DHL,So} and even used in
(classical) differential geometry \cite{C,NW}.

\begin{defi}
For a fixed filtration representing $I$ we will call such a procedure
$\Gamma$ induced by a strict  online solution an \emph{obT} (online bounded
Turing) reduction.
\end{defi}



 Of course, the classical ibT reduction is usually viewed as working on $2^\omega$. If our space does not have primitive recursive branching
 then we no longer can transform it effectively into a copy of $2^{\omega}$.
But as mentioned earlier,
we see this aspect as a feature of the model, and not a
flaw.
One should expect online-ness to be generally representation dependent, at least to some extent.

\smallskip

Although there are notions of a polynomial-time functional in the literature \cite{Ko},
 Definition~\ref{def:strictdef} is much more convenient if we want to define what it means for a punctual solution to be polynomial time.

We can also use  Definition~\ref{def:strictdef} to give an explicit connection of our definition with
 the above-mentioned approach in \cite{hand,onlinebook} which relies on total (not necessarily computable) functions.
As has been observed in \cite{KMM}, a total function can be viewed as a function primitive recursive relative to some oracle.  More formally, we have:

\begin{fact} For an online problem, the following are equivalent:
\begin{enumerate}
\item The problem has a total strict solution $f$ (in the sense of \cite{hand,onlinebook});

 \item The problem has a strict solution primitive recursive relative to some oracle (in the sense of~\cite{KMM}).
 \end{enumerate}
\end{fact}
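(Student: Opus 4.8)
The plan is to prove both implications, of which the forward direction (2)$\Rightarrow$(1) is immediate and the reverse (1)$\Rightarrow$(2) carries the (small) content. For (2)$\Rightarrow$(1): a function that is primitive recursive relative to an oracle is in particular a total function, and the defining conditions of a strict solution, namely (O1) $f(\sigma)\in s(\sigma)$ and the monotonicity condition (O2), make no reference to \emph{how} $f$ is computed. Hence any $f$ witnessing (2) is already a total strict solution in the sense of~\cite{hand,onlinebook}, and (1) holds with the very same $f$.

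For (1)$\Rightarrow$(2) the idea is that a total function is primitive recursive relative to its own graph. I would fix a primitive recursive bijection between $\N$ and the relevant string space and, for $\sigma\in I$, write $c(\sigma)$ for the code of $\sigma$; the coding map on $I$ and the decoding map into $S$ are primitive recursive. Given a total strict solution $f:I\to S$, define an oracle $g:\N\to\N$ by $g(c(\sigma))=c(f(\sigma))$ for $\sigma\in I$ and $g(n)=0$ otherwise. Then $f$ is primitive recursive relative to $g$: to compute $f(\sigma)$ one codes $\sigma$ (primitive recursive), applies the basic function $g$ once, and decodes the result in $S$ (primitive recursive). The computation uses only $\sigma$, with no lookahead, so the resulting solution is strict; and since it realises the identical function $f$, the conditions (O1) and (O2) are inherited verbatim. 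This establishes (2).

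The one point needing care, and essentially the only obstacle, is the precise notion of oracle used in the relativised primitive recursion of~\cite{KMM}. If oracles may be total functions, the construction above is complete as stated. If instead oracles must be sets (that is, $0$--$1$ valued), then I would instead code the graph of $f$ as $A=\{\langle c(\sigma),c(\tau)\rangle : \sigma\in I,\ f(\sigma)=\tau\}$ and recover $f(\sigma)$ by a bounded search over candidate outputs $\tau\in S$. For that search to be primitive recursive one needs a primitive recursive bound on the length of $f(\sigma)$ in terms of $|\sigma|$, which is supplied by the height-matching built into the representation framework (a solution of height $n$ is produced from an input of height $n$); absent this, one can pad each coded output with an end marker so that its length is recoverable from the oracle. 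In either formalisation the retrieval becomes a bounded loop, so $f$ remains primitive recursive in $A$. The combinatorial content of the equivalence is nil: the two notions of solution differ only in the mode of presentation of the \emph{same} function, exactly as~\cite{KMM} observes.
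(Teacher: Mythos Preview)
Your proposal is correct and aligns with the paper's approach. The paper does not give an explicit proof of this Fact; it is stated immediately after the remark ``As has been observed in \cite{KMM}, a total function can be viewed as a function primitive recursive relative to some oracle,'' and this observation is precisely your (1)$\Rightarrow$(2) argument (take the function itself, or its graph, as the oracle). Your treatment of the set-oracle versus function-oracle formalism is more careful than anything the paper spells out, but the underlying idea is the same in both cases.
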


In computability theory many arguments tend to be uniform enough to be relativizable to any oracle. The relativisation phenomenon partially explains why the seemingly crude approach via  totality~\cite{hand,onlinebook} often captures some features of online computation.


\









\subsection{General computable and efficient solutions}

\emph{Many} of our results are valid for computable online solutions, and in fact for any total solutions.
For example, certainly a result showing that \emph{no} computable
solution is possible is
very strong.
For example, the proof that online colouring forests from
Gasarch \cite{Gas} requires $\Omega(\log n)$ many colours
shows that no computable $f$ is possible. We arrive at the following generalisation of the main definition.

\begin{defi} \label{def:maininf1}
A  computable solution to (a representation of) an online problem $(I, S,  s)$  is a \emph{Turing computable}  function $f: I \rightarrow S$ with the properties $(O1)$ and $(O2)$ of Definition~\ref{def:maininf0}.




\end{defi}

One potential extra feature which is captured by this generalisation is that it also covers~\emph{partial} solutions
and, potentially, \emph{partial} representations.
We will not make it overly formal and leave this to the reader. For example,
in the case that the representations
are partial, $f$ would perhaps only need to work well on a valid
input (cf.~Convention~\ref{conv:gl}). For example, in the case when $\omega^{\omega}
$ is representing Cauchy sequences, imagine we are seeing an online way to compute some continuous function.
Then we would only need to
produce a solution for those sequences which actually corresponded to
convergent sequences. We could then require this solution be in some sense punctual when restricted to valid inputs.

We could on the other hand make the definition more efficient, for example:

\begin{defi} \label{def:maininf2}
A  polynomial-time solution to (a representation of) an online problem $(I, S,  s)$  is a  punctual strict solution which is furthermore polynomial-time.
\end{defi}
The definition above is of course heavily notation-dependent.
For the look-ahead case the situation becomes even more complex. Although there are definitions of a polynomial-time functional in the literature~\cite{KC,Ko} they tend to be unconvincingly technical. Also, recall that the robustness lemma fails for polynomial-time simply because changing notation tends to be exponential time.
Therefore the definition will also depend on which version  of the main punctual definition we choose to make polynomial-time; recall there were three such versions.

\subsection{Multiple solutions}

Notice that in actual practice, we might also need a further generalisation  of the above. Sometimes we might compute a (bounded) collection of solutions at least one of which is correct at any stage and at height $n$. This occurs in, for example, using automata to compute minimization problems for graphs of bounded pathwidth (or $k$-interval graphs, see section \ref{sec:four}) given the path decomposition. We will be computing a table of $f(k)$ many solutions at each level $n$. For example, for finding maximal clique you would  have a collection of $2^k$ many possible solutions.
However, it appears that a suitable choice of the space of outputs $S$ can cover this seemingly more general case too.

\section{Oracle computation and uniformity}\label{sec:punctualstuff}

\subsection{Graph oracles do not help}





 The main goal of this subsection is to show that
a graph-oracle cannot significantly help in computing a function online.
For that, we consider  online functionals and online oracle computations.

\begin{defi}
We say that $f:2^\omega\to 2^\omega$ is \emph{online computable}
if $f$ has a representation $F:2^{<\omega}\to 2^{<\omega}$, which is online
computable in the sense above, so that
 for all $\alpha\in 2^\omega$, $F(\alpha \uh u(n))=F(\alpha)\uh n$,
where $F(\alpha)=\lim \{F(\sigma)\mid \sigma\prec \alpha\}$
and $u$ is primitive recursive.

\end{defi}

The space $2^\omega$ can be replaced with a primitively recursively branching totally disconnected space.
Identifying $f$ with its representation $F$, we can unambiguously write this as
$f(\alpha\uh u(n))=f(\alpha)\uh n,$
and (in view of Lemma~\ref{lem:ptime}) this should cause no problems in the case of primitively recursively branching spaces of strings.
We may also allow more than one input in $f$.

\begin{nota}
It is natural to write $f^{ \alpha \upharpoonright u(i)}(i)$  instead of $f(\alpha \upharpoonright u(i))$ and view $\alpha$ as an oracle.
The output of $f^{ \alpha \upharpoonright u(i)}(i)$ can also be interpreted as a natural number, when necessary.
\end{nota}

\begin{rem}
 There are obvious refinements of this. For example, it is natural to restrict ourselves to functionals $f$ whose running time is a polynomial in the length of $\alpha$. Also,  having in mind some particularly nice  primitive recursive function $u$,
$f$ is $u$-\emph{online computable} if
$f(\alpha \uh u(n))=f(\alpha)\uh n$. An obvious case is when
$u(n)=n+k$, which would be \emph{online with delay $k$}.
An illustration of this notion can be seen from
Section \ref{real}, where we look at online real valued functions.
We note that addition of reals is online
computable with delay 2, meaning that to compute
the sum of $x$ and $y$ to within $2^{-n}$ needs
$x$ and $y$ with precision $2^{-(n+2)}.$
Similar delay considerations come from other procedures
in polynomial time analysis such as integration
(see \cite{Ko}).
When $u(n) =n$ then the notions can be restated in terms of strict (ibT primitive recursive) functionals, while online with delay $k$ corresponds to  Lipschitz reducibility. Computable Lipschitz reducibility comes from
algorithmic randomness (\cite{DH}, Chapter 9) where it is
shown that if $f$ is online computable Lipschitz acting on $2^\omega$,
then it preserves the Kolmogorov complexity of all sequences in the sense that
for all $n$, $K(\alpha\uh n)\ge ^+ K(f(\alpha)\uh n)$; that is
$K(\alpha\uh n)\ge K(f(\alpha)\uh n)\pm O(1)$.
\end{rem}

We will consider online functionals acting on algebraic or combinatorial structures, e.g., $\alpha$ could be viewed as a description of a finite segment of an infinite structure of some fixed finite relational signature, e.g., a graph. 
 The extensions of $
\alpha \upharpoonright n$ are the finitely many possible relational  structures on $n+1$ elements extending the structure described by $\alpha \upharpoonright n$. The intuition is that $f^{\alpha\upharpoonright u(i)}(i)$ is expected to compute correctly only if $\alpha$ is an initial segment of  a graph $G$. This is a global property; see Convention~\ref{conv:gl}.

\begin{defi}
A function $h : \mathbb{N} \rightarrow \mathbb{N}$ is online computable from the isomorphism type of a structure $G$ if there is an online $f$ such that, whenever $\alpha$ is a description of $G$, $h(i) = f^{\alpha}(i)$.
\end{defi}

In other words, $h$ is allowed to use any presentation of some fixed $G$ as its (online) oracle.

\begin{exa}
To see how much extra computational power algebraic oracles can give, consider the following example. Let $X$ be an arbitrary subset of $\mathbb{N}$, and define $A(X)$ to be an algebraic structure
in the language of one unary function $s$, one unary predicate $p$, and one constant $o$, and which has the following isomorphism type. When restricted to $s$ and $o$, it is  just $\mathbb{N}$ with $s(x) = x+1$ and $o$ interpreted as $0$. Now define $p(x) \iff x \in X$.
Given any presentation $\alpha$ of $A(X)$, we can decide $X$. So, in particular, computation from an isomorphism type is potentially as powerful as just the usual oracle computation.
\end{exa}

In view of the example above, the reader will likely find the theorem below unexpected.   Its proof is however not difficult; it can be viewed as a variation of an argument in Kalimullin, Melnikov, and Montalb{\'a}n~\cite{KMM}.

\begin{thm}\label{thm:or}
A function $h$ is online computable from the isomorphism type of an infinite graph $G$ if, and only if, $h$ is primitive recursive.

\end{thm}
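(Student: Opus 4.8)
The plan is to prove both directions, the forward one being routine and the reverse one carrying the content.

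For the direction ``primitive recursive $\Rightarrow$ online computable'', I would simply let $f$ ignore its oracle: define $f^{\alpha}(i) = h(i)$, computing $h(i)$ directly from $i$. Since $h$ is primitive recursive this is a primitive recursive (indeed use-free) online functional, so taking $u$ to be the identity the conditions (O1)--(O3) hold trivially and $f^{\alpha}(i)=h(i)$ for every $\alpha$, in particular for every presentation of $G$. Thus $h$ is online computable from the isomorphism type of $G$ (indeed of any infinite graph).

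For the converse, suppose $f$ online computes $h$ from the isomorphism type of $G$, with primitive recursive use bound $u$. Since the space of finite graphs is compact and primitively recursively branching, Convention~\ref{conv:br}, Lemma~\ref{lem:ptime} and Fact~\ref{fact:obT} let me assume $f$ is a strict primitive recursive function on strings. The key reformulation is this: writing $T_G$ for the tree of valid initial segments (the strings that are prefixes of some presentation of $G$), the hypothesis $h(i)=f^{\alpha}(i)$ for every presentation $\alpha$ says exactly that $f$ takes the constant value $h(i)$ on the set $\{\sigma \in T_G : |\sigma| = u(i)\}$, since every such $\sigma$ equals $\alpha\uh u(i)$ for some presentation $\alpha$. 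Hence, to compute $h(i)$ primitively recursively it suffices to produce, primitively recursively in $i$, a single string $\sigma_i \in T_G$ of length $u(i)$ and output $f(\sigma_i)$ (read as a natural number). Moreover a length-$m$ string lies in $T_G$ precisely when the $m$-vertex graph it codes is an induced subgraph of $G$, because any induced subgraph can be listed first and the remaining (countably many) vertices of $G$ enumerated afterwards.

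The crux is therefore to name, uniformly and primitively recursively in $m=u(i)$, an $m$-vertex induced subgraph of $G$ with no effective handle on $G$, whose atomic diagram may be arbitrarily complex and whose tree $T_G$ need not be computable. Here I would invoke the infinite Ramsey theorem: $G$ contains an infinite homogeneous set, i.e.\ either an infinite clique or an infinite independent set. I fix which case holds --- this is a single bit of information about $G$ that I hardcode into the algorithm. In the first case $K_m$ is an induced subgraph of $G$ for every $m$; in the second, the edgeless graph $E_m$ is. So for each $i$ I search (over the primitively recursively many length-$u(i)$ strings, decoding each via the primitive recursive $\delta$ and testing isomorphism to $K_{u(i)}$, resp.\ $E_{u(i)}$, by bounded search over permutations) for a string $\sigma_i$ coding that fixed graph. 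This $\sigma_i$ lies in $T_G$ and has length $u(i)$, so $f(\sigma_i)=h(i)$; and $i \mapsto \sigma_i \mapsto f(\sigma_i)$ is a composition of primitive recursive operations, whence $h$ is primitive recursive.

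The step I expect to be the real obstacle is exactly the one Ramsey resolves: producing a genuine initial segment of a presentation of $G$ without any computable access to $G$. The point of using the infinite (rather than finite) Ramsey theorem is that it yields a single homogeneous type valid for all $m$ at once, so that only one bit about $G$ --- clique versus independent set --- enters the construction; the remaining data of $G$ is irrelevant, which is precisely why an arbitrarily complicated graph oracle collapses to primitive recursion. This should be contrasted with the structure $A(X)$ of the preceding example, whose presentations are too rigid to admit such a uniform family of initial segments, and from which one can indeed recover $X$.
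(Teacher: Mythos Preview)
Your proof is correct and follows essentially the same approach as the paper: both directions are handled as you describe, with the nontrivial one resting on the infinite Ramsey theorem to obtain an infinite clique or anti-clique in $G$, hardcoding that single bit, and then feeding the canonical length-$u(i)$ description of a clique (resp.\ anti-clique) to $f$ in place of the oracle. Your write-up is in fact somewhat more explicit than the paper's about why such a string is a genuine initial segment of a presentation of $G$ and about the primitive recursive bookkeeping; the only superfluous step is the isomorphism search, since the labeled complete (resp.\ empty) graph on $\{1,\dots,u(i)\}$ is unique and can be written down directly.
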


\begin{rem}
It will be clear from the proof below that the result has a natural polynomial time version. The exact definition of a polynomial time  functional is a bit
lengthy; see~\cite{FK,KC}\footnote{The point is that care is needed with
which representations are allowed. Polynomial time functionals for
$(0,1)$ typically use the so-called signed digit representation,
but even for ${\mathbb R}$ there is some problem with the notion  of
the size of the input as discussed in, for instance, \cite{KC}. However,  for
any reasonable representation of graphs of size $n$  this becomes relatively straightforward using, e.g, the standard matrix representation as in \cite{GJ}.}.
We leave the polynomial time case to the reader.
\end{rem}

\begin{proof} By Ramsey's theorem, $G$ either has an infinite clique or an infinite anti-clique; without loss of generality, suppose it is a clique.
Since  $g(i) = f^{\alpha\upharpoonright u(i)}(i)$, where $\alpha$ is \emph{any} representation of $G$, we can assume that the first $u(i)$ bits of $\alpha$ describe a clique. Since the space of all presentations of $G$ is primitively recursively branching, the use $u$ is primitive recursive (see Lemma~\ref{lem:ptime}).
Thus, the oracle can be completely suppressed and the trivial description of an infinite clique can be incorporated into a new procedure $f_0$ which does not use any oracle. On input $i$ the procedure produces a string of length $u(i)$ which describes a finite clique, and then refers to this finite string
(viewed as a partial function) whenever it needs to use the characteristic function of the oracle.
This procedure is easily seen to be primitive recursive (as a function). \end{proof}

Informally, the result says that, from the perspective of online computation, graphs cannot code any non-trivial information into their isomorphism type; i.e., up to a change of their presentation. Both the theorem above and the main result in ~\cite{Dea}
imply that graphs are \emph{not universal for punctual computability} -- a notion which we will not formally define here (see \cite{bsl}).
See Kalimullin, Melnikov, and Montalb{\'a}n~\cite{KMM} for a  generalisation of Theorem~\ref{thm:or} to structures in an arbitrary finite relational language.

\subsection{Interactions with punctual structure theory}

In \cite{bsl} we described the foundations of online structure theory. The main objects in this theory are infinite algebraic structures in which operations and relations are
primitive recursive. As we argued in \cite{bsl}, there are natural strong connections of this new theory and the theory of polynomial-time algebraic structures (see also Alaev~\cite{ALA} and Alaev and Selivanov~\cite{ASel}) with applications to automatic structures~\cite{BHKM}.
Earlier we argued that this kind of punctual structure theory
is akin to Turing-Markov computable analysis, in the objects are
given effectively.
In this paper structures themselves do not have to be primitive recursive. However, the frameworks are closely related via, e.g., Theorem~\ref{thm:pcc} below.

A \emph{presentation} of a countably infinite algebraic structure in a finite language is an isomorphic copy of the structure upon the domain $\mathbb{N}$.  For simplicity, we may assume that the structures in this section are all relational. In this case it becomes consistent with our framework; in particular,  the space of all presentations $I$ of a fixed  structure in a finite relational language is primitively recursively branching.

Each such presentation $\alpha \in [I]$ can be viewed as an isomorphic copy of the structure upon the domain of $\mathbb{N}$. Some of these presentations will be \emph{computable} in the sense that the relations on $\alpha$ will be computable predicates over $\mathbb{N}$.
 It is well-known that a structure may have non-computably isomorphic computable presentations. When we restrict ourselves to primitive recursive presentations and primitive recursive isomorphisms the situation becomes even more complex because the inverse of a primitive recursive function does not have to be primitive recursive. See~\cite{bsl} for a detailed exposition of the theory of punctually categorical structures.

The following notion is not restricted to primitive recursive presentations. A more general  version of the definition below was first discussed briefly in \cite{KMN2} and then also mentioned in \cite{KMN1}. An even more general model-theoretic version of the definition
can be found in~\cite{KMM}.

\begin{defi}\label{def:onlinecc}
A structure $G$ is strongly online categorical if there is an online strict operator $f$ which, on input $\alpha$ and $\beta$ arbitrary representations of $G$ outputs an isomorphism from $\alpha$ onto $\beta$.
\end{defi}
In other words, there exists a primitive recursive functional  $f^{\alpha; \beta}$ with both uses being the identity function, such that
the associated function  $h(i) = f^{\alpha\upharpoonright i; \beta \upharpoonright  i}$ (whose output is interpreted as a natural number)  induces an isomorphism from $\alpha$ onto~$\beta$; recall the latter two are isomorphic copies of $G$ upon the domain $\mathbb{N}$. Equivalently, we could
replace the functional by a primitive recursive function of three inputs $\sigma, \tau, i$ where $|\sigma| = |\tau| =i$ and finite strings are identified with their indices (under some fixed natural enumeration).

The theorem below can be viewed as a variation of another result of Kalimullin, Melnikov, and Montalb{\'a}n~\cite{KMM} on punctual categoricity, but in our strongly online case the proof will be   significantly simpler. Recall that a structure $G$ is homogeneous if for any tuple $\bar{x}$ in $G$  and any pair of elements $y, z \in G$, we have that  $y$ is automorphic to $z$ over $\bar{x}$.

\begin{thm}\label{thm:pcc}
A structure in a finite relational language  is strongly online categorical if, and only if, it is homogeneous.

\end{thm}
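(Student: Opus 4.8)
The engine of the argument is a reformulation of homogeneity (in the strong sense defined just above the statement). First I would prove, by induction on the length $k$, that $G$ is homogeneous if and only if any two $k$-tuples of \emph{pairwise distinct} elements of $G$ realise the same quantifier-free type; equivalently, each relation $R$ of $G$ holds of a tuple $\bar w$ in a way that depends only on the equality pattern of $\bar w$. The base case $k=1$ is immediate from homogeneity applied to the empty parameter tuple, and the inductive step composes two automorphisms: one supplied by the inductive hypothesis to move the first $k-1$ coordinates into place, and one supplied by homogeneity over those (now fixed) $k-1$ coordinates to move the last coordinate into place. Conversely, if relations depend only on the equality pattern then for any parameters $\bar x$ and any $y,z\notin\bar x$ the transposition $(y\,z)$ preserves every relation and fixes $\bar x$, so it witnesses homogeneity. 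This ``total symmetry'' of $G$ is what I exploit in both directions.

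For the direction homogeneous $\Rightarrow$ strongly online categorical the reformulation makes the claim almost immediate. If relations depend only on the equality pattern, then in \emph{every} presentation $\alpha$ of $G$ the truth value $R^\alpha(\bar n)$ depends only on the equality pattern of $\bar n$, and this dependence is the same across presentations (it is read off from $G$ via any isomorphism, which preserves equality patterns). Hence the identity map $n\mapsto n$ is already an isomorphism between \emph{any} two presentations $\alpha,\beta$ of $G$. The identity is a primitive recursive function that ignores its inputs, so it is a strict (use $=$ identity) punctual solution in the sense of Definition~\ref{def:onlinecc}; no back-and-forth search is needed. This collapse is exactly why the strongly online case is simpler than the general punctual categoricity analysis of~\cite{KMM}.

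For the converse I would argue contrapositively, building two presentations on which no strict operator can succeed. By the reformulation, failure of homogeneity yields a tuple $\bar c$ and two elements realising distinct quantifier-free types over $\bar c$; following the use of Ramsey's theorem in the proof of Theorem~\ref{thm:or}, I first pass to an infinite set on which one such ``distinguishing'' type is realised infinitely often, retaining the freedom to insert wrong-type elements arbitrarily late while keeping every finite approximation extendible to a genuine copy of $G$. The key structural point is that a strict solution computes $h(i)=f^{\alpha\upharpoonright i;\beta\upharpoonright i}$ from the first $i$ elements of $\beta$ only, so the image of $\alpha_i$ is committed before $\beta_{h(i)}$ is necessarily seen. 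Since $h$ must be a bijection of $\mathbb{N}$, an elementary counting argument (comparing $\sum_{i\le M} h(i)$ with $\sum_{i\le M} i$ forces $\sum_{i\le M}(i-h(i))\le 0$, which is impossible if $h(i)<i$ for all large $i$) shows that $h(i)\ge i$ for infinitely many $i$; that is, $f$ is forced to assign $\alpha_i$ an as-yet-unseen image $\beta_{h(i)}$. As adversary I then reveal that element with the \emph{wrong} type relative to the images $\beta_{h(0)},\dots,\beta_{h(i-1)}$ of the earlier coordinates, so that $h$ fails to be a partial isomorphism. Strictness, i.e.\ the absence of any lookahead (Lemma~\ref{lem:ptime} no longer rescuing us because there is no delay to amortise), is precisely what prevents $f$ from deferring or retracting its commitment; since both $\alpha$ and $\beta$ are completed to presentations of $G$, this contradicts strong online categoricity.

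I expect the last paragraph's construction to be the main obstacle. One must schedule the diagonalisation so that, at the stage where the wrong-type element is inserted at position $h(i)$, the images $h(0),\dots,h(i-1)$ of the previously handled coordinates have already been revealed (or are themselves adversary-controlled coordinates whose relations can be set consistently), and so that the partial objects built for $\alpha$ and $\beta$ remain honest finite restrictions of copies of $G$. Organising this book-keeping — interleaving the revelation of $\alpha$ and $\beta$, tracking which coordinates are still ``live'', and invoking the infinitude secured by the Ramsey step to guarantee extendibility of both approximations — is the delicate part, whereas the homogeneous direction and the type-uniformity lemma are routine.
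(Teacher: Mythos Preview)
Your reformulation lemma and the forward direction are correct; the identity map is precisely what the paper's ``trivially'' abbreviates.

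For the converse you take a genuinely different and much harder route than the paper. The paper's key simplification --- which your own lemma sets up but which you do not exploit --- is to choose the witnessing tuple $\bar{x}$ of \emph{minimal} length $n$. Minimality means (via your reformulation applied at lengths $\le n$) that every permutation of any $n$-tuple of distinct elements extends to an automorphism, so whatever $f$ commits to on the first $n$ coordinates is automatically harmless. The adversary simply copies $\bar{x}$ into both $\alpha$ and $\beta$, reads off $f$'s commitments there, then adjoins $z$ to $\alpha$ and a $y$-type element to $\beta$ at the relevant position; a single step of diagonalisation finishes the argument. No Ramsey step and no counting argument is needed --- the Ramsey appeal you borrow from Theorem~\ref{thm:or} served a different purpose there (absorbing an oracle into a uniform infinite substructure), and here minimality of $\bar{x}$ supplies all the uniformity you need on the first $n$ coordinates.

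Your counting lemma about bijections is correct, but it does not by itself resolve the step you yourself flag as ``delicate'': at an $i$ with $h(i)\ge i$ you have not arranged that $\alpha_0,\dots,\alpha_{i-1}$ and their $h$-images constitute copies of $\bar{c}$ over which the distinguishing types are actually available, nor that both partial presentations remain extendible to $G$ after the wrong-type insertion. That book-keeping is not a detail to be filled in later --- without minimality it \emph{is} the whole difficulty, and minimality of $\bar{x}$ is exactly the device that dissolves it.
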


\begin{proof} Each homogeneous structure is trivially  strongly online categorical. Now suppose $G$ is strongly online categorical.
Suppose the structure is not  homogeneous, and let $\bar{x}$ be shortest (of length $n$) such that for some $z,y$ we have that $z$ is not in the same automorphism orbit as $y$ over $\bar{x}$.
Construct $\alpha$ and $\beta$ as follows. First, copy $\bar{x}$ into both and calculate the online isomorphism $f$ from    $\alpha \upharpoonright n$ to $\beta \upharpoonright n$. If we identify $\alpha \upharpoonright n$ and $\beta \upharpoonright n$ with $\bar{x}$, then $f$ induces a permutation of $\beta \upharpoonright n$; by the choice of $n$ any permutation of $\bar{x}$ can be extended to an automorphism of the whole structure. Adjoin $z$ to $\alpha$ and find a $y'$ which plays the role of $y$ over $\beta \upharpoonright n$ under any automorphism extending the permutation  $\beta \upharpoonright n \leftrightarrow f(\alpha \upharpoonright n)$. Then necessarily $f(z) = f(y')$, because $f$ has already shown its computation on the first $n$ bits. However, by the choice of $z$ and $y'$, $f$ cannot be extended to an isomorphism no matter how we extend the presentations further.
\end{proof}

Note that we used only totality of the strict functional in the proof.
In the case when the language has functional symbols the theorem no longer holds. Of course, the notion of strongly online and of a presentation will have to be adjusted. But regardless, strong homogeneity will no longer capture the property (whatever it may be exactly).
\begin{exa}
Consider the structure in the language of only one unary functional symbol $s$, and which  consists entirely of disjoint 2-cycles.
Here a 2-cycle is of course a component of the form $\{x, s(x)\}$ where $s(s(x)) = x$ and $x \neq s(x)$.
 According to any reasonable definition of  (strong) online categoricity for functional structures,  this structure has to be (strongly) online categorical. However, it is not homogeneous.
\end{exa}

We leave open:

\begin{prob}
Is it possible to find a reasonable algebraic description of (strongly) online categorical algebraic structures in an arbitrary finite language?
\end{prob}

We suspect that such a description exists, and that the solution will likely boil down to setting the definitions right. If we replace strict with primitive recursive operators in Definition~\ref{def:onlinecc} we will obtain the more general notion of (uniform) online categoricity. With quite a bit of effort
Theorem~\ref{thm:pcc} can be extended~\cite{KMM} to this more general notion, and even beyond.

\section{Weihrauch reduction and online algorithms}

Weihrauch reduction is one of the central notions in computable analysis. It was  named  by
Brattka and Gherardi \cite{BG}. Weihrauch reducibility
 can be viewed as a  natural generalisation
of computable Wadge reducibility~\cite{Wad}.
Henceforth will use $f\le_W g$ to denote Weihrauch reducibility.
$f\le_W g$  has the following intuition.
We have some problem we wish to solve by computing an instance
$f(x)$ of some function $f$.
To do this we produce another instance $x'$
and solve $g(x')$ for $g$, and then convert $g(x')$
back to  of $f(x)$.
In more detail, for functions $f$ and $g$
on $\omega^\omega$-represented spaces $X$ and $Y$,
 $f\le_W g$,
is defined to mean that there are computable $A$ and $B$
 on $\omega^\omega$,
such that for \emph{any} $p_x$, and \emph{any} representation $G$ of $g$,
$$A(p_x,G(B(p_x)))$$
realizes $f$ (i.e. is a name for $f(x)$).
(This is defined here for single-valued functions, but does have a multi-valued
version we won't need.)
This should be thought of as follows for the archetypal case of a
computable metric space.
For a computable metric space,
we take a Cauchy sequence converging to $x$, use $B$ to convert this into a
one converging to $B(x)$, and hence one converging to $g(B(x))$, and finally
using the one converging to $x$ and this one, to one converging to
$A(x,g(B(x))).$

The definition has a number of natural variations; some of these will be discussed below.

\subsection{Weihrauch reduction and incremental computation }\label{miltt}
 In this subsection we establish a formal connection between computable analysis and computer science. More specifically,  we show that a version of Weihrauch reduction borrowed from computable analysis \cite{Wei00} is equivalent to incremental reduction between online problems suggested in Miltersen et al.~\cite{Milt}.

We first state Weihrauch reductions in the online setting.
Suppose $\mathcal{P}$, $\mathcal{Q}$ are online problems.

\begin{defi}
  We say that $\mathcal{P}$ is strongly  Weihrauch reducible to $\mathcal{Q}$, written $\mathcal{P} \leq _{sW} \mathcal{Q}$, if there exist
  Turing functionals $\Phi$ and $\Psi$ such that, whenever $\sigma \in I_{\mathcal{P}}$ is an instance of $\mathcal{P}$, $\Phi^{\sigma} = \tau   \in I_{\mathcal{Q}}$
  is an instance of $\mathcal{Q}$, and whenever $\rho \in s(\Phi^{\sigma}) $ is a solution to $\Phi^{\sigma}$ then $\theta = \Phi^\rho \in s(\sigma)$ is a solution to $\sigma$.

\end{defi}

Here the reduction is strong in the sense that there is a provably more general definition of (plain) Weihrauch reduction  which will be given in due course.
Note that, according to the definition above, all functionals involved are strict, but  this condition can be relaxed  giving a less tight reduction. 

\begin{nota} We write $\mathcal{P}\leq^C_{sW} \mathcal{Q}$ if both strict functionals (in our sense) $\Phi$ and $\Psi$  in the definitions above belong to a complexity class $C$ having sufficiently strong closure properties (e.g., polynomial-time, polylogspace, primitive recursive, etc.).  \end{nota}

\begin{rem}{
The reader might wonder why we will restrict ourselves to strict-type reductions,
or slight variations, for the online setting. The reason is the following.
Suppose that we have two (represented) online problems $I_1$ and $I_2$.
In an online way we want to use $I_2$ to solve $I_1.$
Now suppose that we have some online algorithm for $I_2$. We could take a
$\sigma$ of length $n$ representing an instance $G_n$ of height $n$ of $I_1$,
and convert it into an instance $\sigma'$ of $I_2$, and use
it to produce a solution $s(\sigma')$ of $I_2$, which could be converted back
into a solution $s(\sigma)=A(s(\sigma'))$ of $I_1$. The key issue we will
investigate is how tight the relationships of sizes of the representations
are. Ideally $|\sigma'|=|\sigma|$.
}\end{rem}

A problem $P =(I, O, s)$ is a \emph{decision problem} if $O = \{0,1\}$ and $s$ is merely  a predicate on $I$.  This is the same as to say that
any solution simply decides whether a predicate holds on a string or not.
We say that $\sigma \in I$ is a \emph{positive instance} of $I$ if $s(\sigma) = 1$.
Milterson et al.~\cite{Milt} analysed complexity classes for online algorithms,
and in a slightly more general situation than our \emph{monotone} one where,
for example, the objects only get bigger.
Miltersen et al.~\cite{Milt} investigate online algorithms in which input data may change with time. For example, in a graph a vertex or an edge can disappear. Their reduction takes into account the potential changes of the input.








\begin{defi}
Let $C$ be a complexity class.
A decision problem $\mathcal{P}$ is $C$-incrementally reducible to another decision problem $\mathcal{R}$, denoted $\mathcal{P} \leq^{C}_{incr} \mathcal{R}$, if the following two conditions hold:

  \begin{enumerate}
    \item There is a  transformation $T: I_{\mathcal{P}} \rightarrow I_\mathcal{R}$ in $C$ which maps instances of $\mathcal{P}$ to instances of $\mathcal{R}$ such that
$s_{\mathcal{P}}( \sigma) = s_{\mathcal{R}}( T(\sigma))$ (i.e, $\sigma$ is a positive instance iff its image is a positive instance).
\item There is a transformation $Q$  in $C$ which,  given $\sigma \in I_{\mathcal{P}}$ and the incremental change $\delta$ to  $\sigma$, where $\delta$ changes $\sigma$ to $\sigma'$ of the same length\footnote{That is, $\delta $ is the difference between $\sigma$ and $\sigma'$.},  constructs the incremental change $\delta'$ to $T (\sigma)$ (where $\delta'$ changes $T (\sigma)$ to $T (\sigma')$).
  \end{enumerate}





\end{defi}

\begin{rem} 
We will here only consider $C$ to be the class of polynomial time computable
functions, and
hence use $\le^P_{incr}$ accordingly. Milterson et al.~\cite{Milt}
also considered e.g.~$C$ to be {\sc Logspace}.
 In \cite{Milt} the authors specify the exact time bounds for all computations involved. This is the reason why they need the seemingly redundant part 2 of the definition above. Also,  they look at auxiliary data structure generated for each instance and at the changes induced to the structure. However, from the perspective of general (e.g.)~polynomial time computation this extra information is not necessary since these auxiliary bounds are evidently polynomial time.

\end{rem}

The proposition below shows that $P\leq^P_{incr} Q$ is a variation of Weihrauch reduction from computable analysis which was independently rediscovered by computer scientists.
We note that complexity restricted versions of Weihrauch reducibility were first introduced and studied by Kawamura and Cook in \cite{KC}. Recall that strong Weihrauch reduction is witnessed by a pair of functionals $\Phi$ and $\Psi$.

\begin{fact}\label{thm:incr}
Suppose $\mathcal{P}$ and $\mathcal{Q}$ are online decision problems. Then  $\mathcal{P}\leq^P_{incr} \mathcal{Q}$ iff $\mathcal{P}\leq^P_{sW} \mathcal{Q}$ with $\Psi = Id_{\{0,1\}}$.
\end{fact}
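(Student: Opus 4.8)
The plan is to prove the biconditional Fact~\ref{thm:incr} by directly matching the data witnessing each reduction, since both notions are built from the same ingredients (a transformation of instances plus a mechanism for propagating incremental changes). First I would unpack what each side asserts in the special case of decision problems, where $O = \{0,1\}$ and $s$ is a predicate. The condition $\mathcal{P}\leq^P_{sW}\mathcal{Q}$ with $\Psi = \mathrm{Id}_{\{0,1\}}$ says: there is a strict polynomial-time functional $\Phi$ such that for each instance $\sigma \in I_{\mathcal{P}}$, $\Phi^\sigma = \tau \in I_{\mathcal{Q}}$ is an instance of $\mathcal{Q}$, and the solution to $\mathcal{P}$ on $\sigma$ is obtained by applying $\Psi = \mathrm{Id}$ to the solution of $\mathcal{Q}$ on $\tau$ — that is, $s_{\mathcal{P}}(\sigma) = s_{\mathcal{Q}}(\Phi^\sigma)$. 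The condition $\mathcal{P}\leq^P_{incr}\mathcal{Q}$ says there is a polynomial-time transformation $T$ with $s_{\mathcal{P}}(\sigma) = s_{\mathcal{R}}(T(\sigma))$ together with an incremental map $Q$ propagating length-preserving changes. The key observation is that the solution-preserving transformation $T$ of part~(1) of incremental reducibility is exactly the instance-transforming functional $\Phi$ of strong Weihrauch reducibility.

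The forward direction ($\leq^P_{incr} \Rightarrow \leq^P_{sW}$) should be straightforward: given $T$ and $Q$ in $C = $ polynomial time, I would set $\Phi = T$ (viewed as a strict functional acting incrementally, where $Q$ is precisely what guarantees that $\Phi$ can be realized strictly — each length-preserving increment $\delta$ to $\sigma$ is translated by $Q$ into a length-preserving increment $\delta'$ to $T(\sigma)$, so that $\Phi^\sigma$ is computed incrementally with the identity use) and $\Psi = \mathrm{Id}_{\{0,1\}}$. The defining identity $s_{\mathcal{P}}(\sigma) = s_{\mathcal{Q}}(T(\sigma))$ is exactly what is needed: a solution $\rho \in s_{\mathcal{Q}}(\Phi^\sigma)$ is just the bit $s_{\mathcal{Q}}(T(\sigma))$, and applying $\Psi = \mathrm{Id}$ returns $s_{\mathcal{P}}(\sigma)$, the required solution. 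The role of part~(2) is to certify that $\Phi$ genuinely respects the incremental/strict structure of the online problem, i.e.\ that the use is the identity so no lookahead beyond the current length is needed.

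For the reverse direction ($\leq^P_{sW}$ with $\Psi=\mathrm{Id}$ $\Rightarrow \leq^P_{incr}$), I would take the witnessing strict functional $\Phi$ and define $T = \Phi$ as the instance transformation, which is polynomial time by hypothesis. Since $\Phi$ is \emph{strict}, its use is the identity function, so to compute $T(\sigma') = \Phi^{\sigma'}$ it suffices to read the increment of $\sigma'$ over $\sigma$; this lets me extract the incremental map $Q$ — given $\sigma$ and the length-preserving change $\delta$ taking $\sigma$ to $\sigma'$, the functional $\Phi$ reads only the correspondingly located bits and outputs the change $\delta'$ to $T(\sigma)$. The solution-preservation equation $s_{\mathcal{P}}(\sigma) = s_{\mathcal{Q}}(T(\sigma))$ follows because with $\Psi = \mathrm{Id}_{\{0,1\}}$ the solution bit for $\mathcal{P}$ equals the solution bit for $\mathcal{Q}$ on $\Phi^\sigma = T(\sigma)$.

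The main obstacle, and the part requiring genuine care rather than bookkeeping, is verifying that strictness of $\Phi$ (use equal to the identity) is exactly equivalent to the existence of the incremental propagation map $Q$ of part~(2), and that this equivalence is uniform in polynomial time. I would argue this by appealing to Fact~\ref{fact:obT} and the discussion of strict (obT) operators: a strict functional on a primitively recursively branching space is just a function on finite strings that, when the input grows by a length-preserving increment, updates its output by reading only the changed portion — which is precisely the data encoded by $Q$. I expect the subtlety to lie in matching Miltersen et al.'s length-preserving notion of ``incremental change'' with the strict identity-use convention, and in checking that both $T$ and $Q$ land in the same complexity class $C$ (here polynomial time), for which I would invoke the closure properties of $C$ noted in the notation for $\leq^C_{sW}$. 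The equality $\Psi = \mathrm{Id}_{\{0,1\}}$ is what forces the reduction to be solution-preserving rather than merely solution-computing, aligning it with part~(1) of incremental reducibility.
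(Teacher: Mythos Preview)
Your overall strategy is correct and matches the paper's: in both directions one simply identifies the instance-transforming map $T$ with the functional $\Phi$, and the condition $\Psi = \mathrm{Id}_{\{0,1\}}$ is exactly the solution-preservation clause $s_{\mathcal{P}}(\sigma) = s_{\mathcal{Q}}(T(\sigma))$.

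However, you over-complicate the role of $Q$ and tie it to strictness in a way that is both unnecessary and not quite right. In the Miltersen et~al.\ setting, the ``incremental change'' $\delta$ is an arbitrary length-preserving \emph{edit} (bits may flip), not a monotone extension; so $Q$ is not what certifies that $\Phi$ has identity use. Conversely, strictness of $\Phi$ does \emph{not} mean that a change to one bit of $\sigma$ only affects ``correspondingly located bits'' of $\Phi^\sigma$ --- a strict functional may read all of $\sigma$ to produce each output bit. The paper's argument is shorter: as noted in the Remark preceding the Fact, part~(2) of the incremental definition is ``seemingly redundant'' at the polynomial-time level, because given a polynomial-time $T$ one can simply recompute $T(\sigma')$ from scratch and compare with $T(\sigma)$ to obtain $\delta'$; the required bound on $Q$ is then just a big-$O$ of the bound on $T$ (equivalently, on $\Phi$). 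Your appeal to Fact~\ref{fact:obT} and the obT discussion is not needed here and introduces a misreading of what $Q$ encodes.
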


\begin{proof}
Suppose $P \leq^P_{incr} Q$. Then the transformation $T$ from the definition of incremental reduction can be used as $\Phi$ in the definition of $\leq^P_{sW}$.
Since $\sigma$ is a positive instance iff $T(\sigma)$ is, $\Psi = Id_{\{0,1\}}$.

Conversely, suppose $P\leq^P_{sW} Q$ via $(\Psi, Id_{\{0,1 \}})$, where $\Psi$ is a polynomial functional from the space of inputs $I_\mathcal{P}$ of $\mathcal{P}$ to the space of inputs $I_\mathcal{Q}$ of $\mathcal{Q}$.
Then the first part of the definition of incremental reduction follows from the assumption that $\Psi$ is a functional in $C$. By the continuity of $\Psi$ and the fact that we used $Id$ as the second functional, it suffices to deduce a polynomial time bound on the changes in the inputs of $\Psi(\sigma)$ based on the changes in $\sigma$.
But this bound is just a big-O of the bound given by $\Psi$. \end{proof}

Following \cite{Milt}, we can impose specific bounds on the number of steps required for example, calculating $\delta'$ based on $\delta$.  The expectation is that it should be easier  to make the change than to simply recompute $T(\sigma')$ ``from scratch''. All these specialised bounds can also be expressed in terms of strong Weihrauch reduction; we omit details. As an application of Theorem~\ref{thm:incr} and various results in \cite{Milt}, we can obtain a number of polynomial time and polylogtime Weihrauch reductions in the study of online algorithms.

\subsection{Weihrauch reduction and online graph colouring} \label{sec:four}

Before we discuss the role of Weihrauch reduction in online colouring problem we give a brief overview of the latter.

\subsubsection{Online graph colouring}

Many problems can be re-cast as colouring problems, for example
{\sc Bin Packing}. Indeed, colouring can be thought
of as \emph{avoiding configurations}.
In basic graph colouring, we are simply avoiding
an edge connecting vertices of the same colour, but we could instead
avoid, for example, triangles or any finite set of configurations in some
kind of constraint satisfaction problem. However, as
this is an introductory paper we will stick to basic graph colouring.
There is a large literature on this area
such as Kierstead~\cite{hand}.
Graph colouring is quite a flexible tool, and many algorithmic meta-theorems
such as for monadic second order logic (like Courcelle's Theorem (see \cite{DF,Gr})) can be viewed as colouring with constraints. We believe that this
material has great online potential.

We will mention some of this in this subsection.
As an illustrative example, we will online colour
finite or infinite trees and forests. So the objects of interest are forests
being enumerated one vertex at a time.  Along with the set of vertices the enumeration will also need to include the adjacency relation amongst the vertices already enumerated. In other words, at the $(n+1)$-th step the enumeration will provide us an index for the $(n+1)$-th vertex $v_n$ as well as a finite binary string encoding whether $v_nv_j\in E(G)$ for each $j<n$. To represent the space of all enumerations of a finite graph with $n$ vertices, we can use a finite branching tree of height $n$; and so to represent the space of enumerations of all (finite or infinite) graphs we can use a representation $\delta$ with domain a compact subset $T\subset {\omega}^{\omega}$.

The  result  below is an easy
(restated in our notation)
result from the folklore essentially following from
Bean \cite{bean}.
It works for any (not necessarily primitive recursive)
total online procedure.
\begin{prop} For every online algorithm $A$ there is a $\sigma\in T$ of length $2^{t-1}$ such that the respective graph
 ${\delta(\sigma)}$ cannot be coloured by $A$ in fewer than $t$ colours.
\end{prop}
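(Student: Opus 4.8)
The plan is to treat the statement as an \emph{adaptive adversary} argument; since we use only totality of $A$, it will hold for any (not necessarily primitive recursive) total online procedure, as the surrounding remark claims. First I would recast the situation as a game played along a branch of $T$: the adversary extends the current node $\sigma\in T$ by one new vertex $v$, simultaneously declaring the adjacencies of $v$ to the vertices already revealed, and $A$ responds with a colour for $v$; crucially the adversary may choose the adjacencies of the next vertex \emph{after} seeing all of $A$'s previous colours. Because we colour forests, the adversary will only ever join $v$ to vertices lying in pairwise distinct current components, so no cycle is created and $\delta(\sigma)$ remains a forest at every stage. The branch actually played out is the desired $\sigma$, and forcing $A$ to use at least $t$ colours on $\delta(\sigma)$ is exactly the assertion that $\delta(\sigma)$ cannot be $A$-coloured with fewer than $t$ colours.

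The heart of the argument is an inductive construction of a \emph{rainbow $k$-front}: a set of $k$ vertices lying in $k$ distinct components that $A$ has been forced to colour with $k$ pairwise distinct colours. I would prove by induction on $k$ that the adversary can force a rainbow $k$-front while revealing at most $2^{k}-1$ vertices. The base case $k=1$ is a single vertex. For the step, the adversary first forces a rainbow $(k-1)$-front $X$ with colour set $C$, then, on fresh vertices, a second rainbow $(k-1)$-front $Y$ with colour set $D$; by the inductive hypothesis this uses at most $2(2^{k-1}-1)=2^{k}-2$ vertices. Now inspect $C\cup D$. If $|C\cup D|\ge k$, then among the $2(k-1)$ front vertices (all in distinct components) there are already $k$ distinct colours, so a rainbow $k$-front is present for free. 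Otherwise $C=D$, and every colour of $C$ occurs on two representatives in distinct components; revealing one further vertex $x$ joined to the $k-1$ representatives of $X$ (distinct components, so no cycle) forces $A$ to colour $x$ with some $c\notin C$, and then $x$ together with the untouched $Y$-representatives is a rainbow $k$-front. Either way at most $2(2^{k-1}-1)+1=2^{k}-1$ vertices are used.

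Given a rainbow $(t-1)$-front, obtained with at most $2^{t-1}-1$ vertices, the adversary reveals one last vertex adjacent to all $t-1$ front vertices (they lie in distinct components, so the graph stays a forest); $A$ is then forced to assign it a $t$-th colour. The total is at most $(2^{t-1}-1)+1=2^{t-1}$, and padding with isolated vertices (which cannot lower the colour count) yields a $\sigma\in T$ of length exactly $2^{t-1}$ on which $A$ uses at least $t$ colours. The step I expect to be delicate, and which carries the real content, is the doubling: a naive single recursive copy fails because $A$ can recycle a fixed palette of $k-1$ colours forever and so never be driven to a $k$-th colour. The case analysis on $C\cup D$ is precisely what defeats this — the ``$C=D$'' branch shows that palette recycling is exactly what hands the adversary a \emph{spare} representative of each colour in a fresh component, which it then spends to manufacture the new colour — and the main care is in the bookkeeping of components and in checking that the worst case meets the recurrence $f(k)\le 2f(k-1)+1$, giving the sharp count $2^{k}-1$ and hence $2^{t-1}$.
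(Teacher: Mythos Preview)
Your argument is correct and is precisely the classical adversary construction due to Bean that the paper invokes but does not reproduce; the paper states the proposition without proof, attributing it to folklore ``essentially following from Bean''. Your formulation via \emph{rainbow $k$-fronts} (pairwise distinct components, pairwise distinct colours), the doubling recursion $f(k)\le 2f(k-1)+1$, and the final spike to force colour $t$ match the standard proof, and your observation that only totality of $A$ is used is exactly the remark the paper makes.
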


We will write $\chi_A(G_\sigma)$ for the number of colours used to
colour  $G$ when processed
by the online algorithm $A$. The above is nearly optimal, in that we have the following:

\begin{thm}[Lovasz, Saks and Trotter \cite{LST}] There exists an online
algorithm $A$ such that for every 2-colourable graph $G$, if $G$ has $n$
vertices then $\chi_A(G)\le 1+2\log n$.
\end{thm}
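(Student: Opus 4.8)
The plan is to exhibit an explicit online algorithm and control its colour count by a \emph{size-doubling} invariant. The guiding idea is already visible on the subclass of forests, where the naive greedy (``First-Fit'') strategy suffices: process the vertices $v_1,v_2,\dots$ in the order revealed, and colour each $v_s$ with the least positive integer not occurring among the already-coloured neighbours of $v_s$. This is manifestly an online algorithm in the sense of the previous sections, since the colour of $v_s$ depends only on the induced subgraph on $\{v_1,\dots,v_s\}$ that the representation $\delta$ hands us, and it is patently primitive recursive and total (so the analysis also applies to the computable solutions of Definition~\ref{def:maininf1}). It then remains only to bound the number of colours it can be forced to use.

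The heart of the matter is the following \emph{doubling lemma}: whenever the greedy rule assigns colour $c$ to a vertex $v_s$, the connected component of $v_s$ in the graph revealed through stage $s$ has at least $2^{c-1}$ vertices. I would prove this by induction on $s$. The case $c=1$ is trivial. For $c\ge 2$, the rule assigns $c$ only because each colour $1,\dots,c-1$ already appears on some neighbour of $v_s$; fix neighbours $w_1,\dots,w_{c-1}$ with $w_i$ coloured $i$. Because the revealed graph is acyclic, no two of the $w_i$ can lie in a common tree before $v_s$ arrives, since the two edges from $v_s$ together with the path joining them would close a cycle; hence the $w_i$ occupy pairwise distinct components at stage $s-1$. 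Applying the hypothesis at the stage where $w_i$ first received colour $i$, and using that components only grow as vertices are added, each of these components already has at least $2^{i-1}$ vertices at stage $s-1$. Adjoining $v_s$ fuses the $c-1$ disjoint components with $v_s$ itself into a single component of size at least $1+\sum_{i=1}^{c-1}2^{i-1}=2^{c-1}$, as claimed. Granting the lemma, if the forest has $n$ vertices and the largest colour is $c$, then $2^{c-1}\le n$, so $c\le 1+\log_2 n\le 1+2\log n$; in fact this already meets the lower bound proved just above, so on forests the factor $2$ is slack.

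The step I expect to carry the real weight is exactly the disjointness observation inside the lemma — that acyclicity forces the differently-coloured neighbours of a new vertex into distinct components — because it is precisely what turns a purely local greedy choice into a global doubling, and it is exactly what \emph{fails} for graphs that are merely $2$-colourable rather than acyclic. Indeed, plain First-Fit can be driven to use many colours on bipartite graphs, so for the full strength of the theorem one cannot argue this way. The genuine obstacle, and the true content of the Lov\'asz--Saks--Trotter argument, is that each connected bipartite component carries a two-valued bipartition whose orientation is determined only up to a global flip, and a new vertex may merge several components whose chosen orientations are mutually incompatible with the colours already committed.

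To overcome this I would maintain, for every current component, both a consistent orientation of its bipartition and an integer \emph{rank}, updated by a union-by-size rule when a vertex merges several components, so that the rank never exceeds $\log_2(\text{component size})\le \log_2 n$. Colours are then issued in pairs, one colour per side of the bipartition at each rank level, and the reconciliation upon merging is handled by re-using the colour pair of the surviving higher-rank component and only advancing the rank when two equal-rank components coalesce. The bookkeeping keeps at most $\log_2 n$ active rank levels and two sides per level, which is where the second logarithmic factor comes from and yields the stated $1+2\log n$ bound; verifying that this orientation-reconciliation keeps the colouring proper across every merge is the delicate point, and is the part I would expect to demand the most care.
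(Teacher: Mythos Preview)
The paper does not actually prove this theorem; it is quoted from \cite{LST} and used only as an illustration, so there is no proof in the text to compare against. A few remarks on your proposal on its own merits.

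Your forest argument via First-Fit and the doubling lemma is correct and complete; it even gives the sharper bound $1+\log_2 n$, matching the lower bound recorded just before the theorem.

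For general bipartite graphs your rank-and-orientation sketch is the right idea (and is essentially the Lov\'asz--Saks--Trotter mechanism), but you stop short of the one invariant that makes it go through. Adopt the rule that when components of unequal rank merge, the unique top-rank component keeps its orientation and all others are flipped to agree with it. Then a vertex coloured at rank $r$ keeps its recorded side for as long as its component remains at rank $r$, since in that interval the component only absorbs strictly lower-rank components and is never itself flipped. Consequently, when a new vertex $v$ receives a rank-$r$ colour (the case where the top rank is unique and unchanged), any neighbour of $v$ that also carries a rank-$r$ colour still sits on the side recorded in its colour, and $v$, being adjacent, is on the opposite side; every other neighbour carries a strictly smaller rank and hence a different colour outright. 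When two top-rank components coalesce the rank increases and $v$'s colour is fresh. This is exactly the ``delicate'' verification you flagged, and it closes the argument; nothing further is missing. One small bookkeeping point: to land on $1+2\log_2 n$ rather than $2+2\log_2 n$ you should treat the incoming vertex itself as a rank-$0$ singleton before merging, so that rank $0$ is occupied only by isolated vertices and needs a single colour.
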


This brings us to the notion of a \emph{performance ratio}.
Most algorithms taught in a standard combinatorics class are \emph{offline}. 
This means that given a finite structure $H$ as input, the offline algorithm is allowed to read the whole of $H$ before performing its calculations and giving the output. This is in contrast to an online algorithm, which must produce the next bit of the output after scanning the next bit of the (encoding of) the input $H$. This clearly puts the online algorithm at a disadvantageous position, for an input $H$ can hide critical information in its global structure which an offline algorithm (but not an online one) can see before beginning to write the output. This motivates the definition below where we compare how much an online algorithm is disadvantaged compared to an offline one.


Consider the situation of  an inductive problem in a class
${\mathcal C}$, and suppose we have an optimisation problem.
Then associated with a $\sigma\in I,\tau\in S$ will be a \emph{cost}
function $c(\sigma,\tau)$, measuring the cost of solution $\tau$ for problem instance $\sigma$, where the cost is a certain metric used to judge how good a solution to a problem is. Now the \emph{performance ratio} of an algorithm $f$
is the ratio of  $c(\sigma,o(\sigma))$ with $c(\sigma,f(\sigma))$ where
$o(\sigma)$ is an optimal solution for $\sigma$; meaning the offline solution.

We illustrate this with colourings. In this case, the problem would be a graph, the solution would be a colouring of the graph, and the cost of a solution would be the number of colours used by the colouring; the fewer colours used by a solution, the better it is. Thus the cost of an optimal solution to a given problem $G$ is the offline chromatic number of $G$.

The \emph{offline} chromatic number  of a graph $G$ will be
denoted by $\chi_{\mbox{off}}(G)$ and for forests, we would
have
$$\chi_{\mbox{off}}(G)=2,$$
as it is well-known that trees and forests can be offline coloured in two colours.
\begin{defi}[Sleator and Tarjan \cite{TS}]
The \emph{performance ratio} of an algorithm $A$ in a represented space 
is defined to be
$$r(\sigma)=\frac{\chi_A(T_\sigma)}{\chi_{\mbox{off}}(T_\sigma)}.$$
\end{defi}

Here we are stating the definition for graph colouring but the definition
applies to any online optimisation problem, as above. In the case of
colouring forests, we see that the approximation ratio is
$O(\log (|\sigma|)).$
In the infinite case, the relevant approximation ratio is the growth
rate of $r(\sigma)$ for all paths in the tree $T$ representing the problem.

For example, a graph is called $d$-{\em inductive} (or $d$-\emph{degenerate})
if the vertices of $G$ can be ordered as $\{v_1,\dots,v_n\}$ so that
for every $i\le n$, $|\{j>i\mid v_iv_j\in E\}|\le d$. For example,
by Euler's formula, all planar graphs are 5-inductive. For those who are familiar with graph
theory, $d$-inductive graphs also include all graphs of treewidth $d$, an extremely important class in algorithmic graph theory (see Downey
and Fellows \cite{DF}, for example).
Again note that $d$-inductive graphs have a compact representation.

\begin{thm}[Irani \cite{Ir1,Ir2}]
Let $\sigma$ represent a $d$-inductive graph of height $n$.
Then \emph{first fit} will use at most $O(d\log n)$ many colours
to colour $G_\sigma$. Moreover, for any online algorithm $A$,
there is a $d$-inductive $G_\sigma$ such that
$\chi_A(G_\sigma)$ is $\Omega(d\log n).$
\end{thm}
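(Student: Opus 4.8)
The plan is to prove the two halves separately: a counting argument bounding the number of colours that \emph{first fit} can use on a $d$-inductive graph, and an adversary construction for the matching online lower bound.

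For the upper bound, fix the presentation order in which the vertices of $G_\sigma$ arrive and let $V_{\geq k}$ denote the set of vertices to which \emph{first fit} assigns a colour $\geq k$. Two observations drive the argument. First, every induced subgraph of a $d$-inductive graph is again $d$-inductive (just restrict the witnessing degeneracy order), so $G_\sigma[V_{\geq k}]$ has at most $d\,|V_{\geq k}|$ edges. Second, by the definition of \emph{first fit}, a vertex $v$ receiving colour $c$ must, at the moment it is presented, already see a neighbour of each colour $1,\dots,c-1$; hence if $c \geq k$ it has at least $c-k$ neighbours lying in $V_{\geq k}$. Combining these, each $v \in V_{\geq k+4d}$ has at least $4d$ neighbours inside $V_{\geq k}$, and double counting the edges of $G_\sigma[V_{\geq k}]$ gives $4d\,|V_{\geq k+4d}| \leq 2d\,|V_{\geq k}|$, i.e.\ $|V_{\geq k+4d}| \leq \tfrac12 |V_{\geq k}|$. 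Iterating this halving from $|V_{\geq 1}| = n$ shows that $V_{\geq 1 + 4d j}$ is empty once $j > \log_2 n$, so no colour exceeds $1 + 4d\lceil \log_2 n\rceil = O(d \log n)$.

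For the lower bound I would first dispose of the case $d=1$: forests are exactly the $1$-inductive graphs, and the earlier proposition (following Bean~\cite{bean}), which produces a $\sigma$ of length $2^{t-1}$ whose forest $\delta(\sigma)$ defeats any online $A$ using fewer than $t$ colours, already yields $\chi_A(G_\sigma) = \Omega(\log n)$ with $n = 2^{t-1}$. To reach $\Omega(d \log n)$ for general $d$ I would amplify this forest strategy by running $d$ interacting copies of it and forcing the algorithm to spend a pairwise disjoint palette on each copy; concretely, I would substitute for the single forest gadget a blow-up in which each newly presented vertex of one copy is also joined to the current representatives of the other $d-1$ copies. Each copy then contributes $\Omega(\log n)$ colours on disjoint palettes, so the total forced palette grows by a factor of $d$; meanwhile each vertex acquires only $d-1$ extra backward neighbours on top of the single one inside its own copy, keeping the composite $d$-inductive, and the vertex count grows only by the factor $d$, so $n$ vertices still force $\Omega(d \log n)$ colours.

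The hard part will be the amplification step in the lower bound. The interconnecting edges between the $d$ copies are exactly what prevents the algorithm from recycling a colour already used in a sibling copy, but they also threaten $d$-inductiveness; the crux is to choose the witnessing degeneracy order of the composite graph so that every vertex keeps at most $d$ later neighbours in spite of these new edges, and to verify that the combined adversary is a legitimate online strategy consistent with the representation tree $T$ (monotone, and producing a graph of the prescribed height $n$). The upper bound, by contrast, is essentially the displayed halving recursion and should present no real difficulty.
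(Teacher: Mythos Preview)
The paper does not prove this theorem; it is quoted as a known result of Irani~\cite{Ir1,Ir2} with no proof given, so there is nothing in the paper to compare your attempt against.

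That said, your upper bound is essentially Irani's original halving argument and is correct as written: the key inequality $4d\,|V_{\geq k+4d}| \leq 2\,|E(G_\sigma[V_{\geq k}])| \leq 2d\,|V_{\geq k}|$ follows exactly as you indicate, and iterating gives the $O(d\log n)$ bound. One small point: when you write that a vertex with colour $c\geq k$ has ``at least $c-k$ neighbours lying in $V_{\geq k}$'', you are using that the first-fit witnesses of colours $k,\dots,c-1$ all lie in $V_{\geq k}$, which is immediate; you might state this explicitly.

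Your lower-bound sketch is the weaker half. The $d=1$ case via the forest adversary is fine, but the amplification you describe---joining each new vertex of one copy to ``current representatives'' of the other $d-1$ copies---is too vague to be checked. You have not specified which representatives, nor argued why the $d$ copies are forced onto \emph{disjoint} palettes (an algorithm aware of the cross-edges could try to synchronise colours across copies rather than separate them). Irani's actual lower-bound construction is more delicate and does not proceed by naive product/blow-up of the Bean forest; it builds the $d$-inductive instance directly so that the adversary can force a new colour every $O(1/d)$ fraction of a doubling. If you want to complete your route you will need a precise invariant guaranteeing palette disjointness and a careful degeneracy-order argument; as it stands the amplification step is a genuine gap.
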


Sometimes, this growth rate reaches a limit, as in problems with
\emph{constant} approximation ratios.

The classical example is {\sc Bin Packing}, which can be viewed as a graph colouring problem. We can think of bins as colours,
and the objects as having sizes, and the constraint being that
we cannot have more objects of a specific colour than the bin constraint (bin size).
That is,
{\sc Bin Packing}
 takes as input
sizes $a_i\in {\mathbb N}$
and a parameter $V$
(representing each bin size), and assigned colour
$c(a_i)$
subject to $\sum_{c(a_i)=c}a_i\le V$ for each $c$. Here we seek to minimize
the number of colours (i.e., the number of bins used).

Notice that {\sc Bin Packing} is another example of colouring with
constraints.

\begin{thm}[see \cite{GJ}]
First fit gives a performance ratio of
$2$ for online {\sc Bin Packing}.
\end{thm}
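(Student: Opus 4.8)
The plan is to give the classical volume/``half-full'' argument, adapted to the online representation set up above. Recall that in the {\sc Bin Packing} representation an instance $\sigma \in I$ of height $n$ lists item sizes $a_1, \dots, a_n \in \mathbb{N}$ together with the capacity $V$, and \emph{First Fit} is the online algorithm $A$ which, upon seeing $a_k$, scans the already-opened bins in order of creation and places $a_k$ into the first one with enough remaining room, opening a fresh bin only if none exists. This $A$ is visibly primitive recursive (indeed polynomial time) and satisfies $(O1)$--$(O2)$, so it is a legitimate online solution; the content of the theorem is purely the bound on $r(\sigma) = \chi_A(T_\sigma)/\chi_{\mbox{off}}(T_\sigma)$.

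The heart of the argument is the following claim: \emph{among the bins eventually opened by First Fit, at most one is filled to total size at most $V/2$.} First I would fix two bins $B_i$ and $B_j$ with $i < j$ (indexed by order of creation) and suppose, for a contradiction, that each has final content at most $V/2$. Consider the first item $x$ ever placed into $B_j$, i.e.\ the item whose arrival opened $B_j$. Since the content of any bin only increases as items are added, at the moment $x$ arrived the bin $B_i$ (already open, as $i<j$) had content at most its final value $\le V/2$, hence free space $\ge V - V/2 = V/2$. On the other hand $x$ lies in $B_j$, whose final content is $\le V/2$, so $\mathrm{size}(x) \le V/2 \le$ the free space of $B_i$. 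Thus $x$ would have fit into the earlier bin $B_i$, contradicting the First Fit rule which opened $B_j$ for $x$. This establishes the claim.

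From the claim I would extract the performance bound by a volume count. Write $m = \chi_A(T_\sigma)$ for the number of bins First Fit uses. By the claim, at least $m-1$ of these bins carry total size strictly greater than $V/2$, so $\sum_k a_k > (m-1)\,V/2$. Since every bin in any packing holds total size at most $V$, any solution --- in particular the optimal offline one --- uses at least $\sum_k a_k / V$ bins; hence $\chi_{\mbox{off}}(T_\sigma) \ge \sum_k a_k / V > (m-1)/2$. Rearranging gives $m < 2\,\chi_{\mbox{off}}(T_\sigma) + 1$, and as both sides are integers, $m \le 2\,\chi_{\mbox{off}}(T_\sigma)$, so $r(\sigma) \le 2$ as required.

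The one point needing care --- and the only genuine obstacle --- is the timing/monotonicity step inside the half-full claim: one must reason about the state of $B_i$ at the instant $x$ was \emph{processed} rather than at the end, using that bin contents are monotone in the online process. The boundary case of a bin filled to exactly $V/2$ also forces the inequalities to be non-strict in the right places (free space $\ge$ item size is enough for First Fit to use the bin), which is why the claim is phrased with ``at most $V/2$'' rather than ``less than $V/2$''. Everything after the claim is elementary arithmetic.
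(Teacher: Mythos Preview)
Your argument is correct and is exactly the standard ``half-full'' volume argument for the bound $\chi_A(T_\sigma)\le 2\,\chi_{\mbox{off}}(T_\sigma)$; the timing/monotonicity point and the handling of the $V/2$ boundary are both dealt with properly, and the final integrality step $2\,\chi_{\mbox{off}}(T_\sigma)>m-1\Rightarrow 2\,\chi_{\mbox{off}}(T_\sigma)\ge m$ is valid. There is nothing to compare against in the paper itself: the theorem is stated with a citation to \cite{GJ} and no proof is given, so your write-up simply supplies the omitted classical argument.
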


\subsubsection{Online reduction} In this subsection we define a new version of Weihrauch reduction, and we also give a non-trivial
example of such a reduction between two distinct online problems.

\smallskip

Let $X$ and $Y$ be spaces represented by names in $2^\omega$ (for convenience).
Again we think of $f$ and $g$ as being solutions for minimisation problems
corresponding to $X$ and $Y$,
respectively.
Thus, for example, we are thinking of $X$ and $Y$ as
inductive structures with filtrations $\{X_n\mid n\in {\mathbb N}\}$
and $\{Y_n\mid n\in {\mathbb N}\}$
respectively. Then the strings of length $n$ represent the structures of height
$n$, and
$f(\sigma)$ will represent a solution to the problem represented by $\sigma$.
Thus they will have an associated \emph{cost}
which in the case of graph colouring is the number of colours used so far, denoted  as $c(\cdot)$.
We will denote
$f_{\mbox{off}}$ and $g_{\mbox{off}}$
as  offline solutions.
That is $f_{\mbox{off}}(\sigma)$ would be the solution to the minimisation
problem $X_n$ of height $n$ with $\delta(\sigma)=X_n, $ and similarly
$g_{\mbox{off}}$.

We state the below for single valued functions, but again there is an
analogous multi-valued version, where the solution produced for $g$
should be within the correct ratio.
The idea of the following is that on input $\alpha\uh n$,
we want to compute (a representation of) $f(\alpha\uh n)$\footnote{We want to avoid explicit representations, but of course we should have $F$ representing $f$ with $F$ acting on $2^{\omega}$,
and for any $\alpha\in 2^\omega$, $\lim_nF(\alpha\uh n)$ realizes (represents)
$f(\alpha)$.}.
To this we will apply the algorithm $B$ to generate an input to
(a representation of) an input for $g$, and then
use the algorithm $A$ to translate this back to give $f(\alpha \uh n)$.
Again we emphasis that this is all working with representations, and should be read this way.

\begin{defi}  Let $f,g$ be functions on $2^\omega$. Then
 $f$ is called \emph{ratio preserving online reducible to
$g$}, $f\le_O^r g$, if there are (type II) online
computable functions $A$ and $B$ with
and a constant $d$,
such that for all $n$,
$$f(\alpha\uh n)=A(\alpha\uh n,g(B(\alpha\uh n)),$$
and the ratio of $c(f(\alpha\uh n))$ to $c(f_{\mbox{off}}(\alpha \uh n))$ is
at most $d$ times the ratio of
$c(g(B(\alpha\uh n)))$ to $c(g_{\mbox{off}}(B(\alpha\uh n)))$.

\end{defi}

The fact below isolates the most important feature of the reduction.

\begin{fact}\label{fact:red}
If $f\le_O^r g$ then , for some $d>0$,  $\dfrac{c(f\uh n)}{c(f_{\mbox{off}} \uh n)} \leq d\cdot\dfrac{c(g\uh n)}{c(g_{\mbox{off}}\uh n)}$.
\end{fact}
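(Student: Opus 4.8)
The plan is to extract the inequality directly from the definition of $\le_O^r$, since the ``ratio preserving'' terminology is designed precisely so that this bound is built into the reduction. First I would fix a name $\alpha \in 2^\omega$ and recall the two clauses guaranteed by $f \le_O^r g$: there are online computable functions $A, B$ and a constant $d$ with $f(\alpha \uh n) = A(\alpha \uh n, g(B(\alpha \uh n)))$ for all $n$, together with the quantitative clause bounding the performance ratio of $f$ by $d$ times that of $g$ on the transformed input $B(\alpha \uh n)$. The first clause is used only to guarantee that $c(f(\alpha \uh n))$ is a well-defined cost, namely the cost of the composite solution obtained by pushing $\alpha \uh n$ through $B$, solving the $g$-instance, and translating back through $A$.

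Next I would make explicit the notational convention implicit in the statement of the Fact: the abbreviations $c(f \uh n)$ and $c(f_{\mbox{off}} \uh n)$ stand for $c(f(\alpha \uh n))$ and $c(f_{\mbox{off}}(\alpha \uh n))$, while $c(g \uh n)$ and $c(g_{\mbox{off}} \uh n)$ stand for the costs $c(g(B(\alpha \uh n)))$ and $c(g_{\mbox{off}}(B(\alpha \uh n)))$ of the $g$-solution, online and offline respectively, on the image of $\alpha \uh n$ under the reduction map $B$. Under this reading the second clause of the definition is literally the asserted inequality $\frac{c(f \uh n)}{c(f_{\mbox{off}} \uh n)} \le d \cdot \frac{c(g \uh n)}{c(g_{\mbox{off}} \uh n)}$, so the Fact follows by unwinding.

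The only real work is the bookkeeping needed to see that both performance ratios are measured at a compatible level $n$, and this is where I expect the subtlety, such as it is, to lie. Because $A$ and $B$ are type II online computable they carry primitive recursive moduli, and the online-ness condition $B(\alpha \uh u_B(n)) = B(\alpha) \uh n$, together with the analogous condition for $A$, ensures that the length-$n$ approximation of $f(\alpha)$ is genuinely the translate through $A$ of the $g$-solution on $B(\alpha \uh n)$. I would check that these moduli compose so that the cost $c(g \uh n)$ appearing on the right-hand side is exactly the cost incurred by the $g$-algorithm on the instance $B(\alpha \uh n)$ fed to it at that stage; once this alignment is confirmed, no further estimate is required and the constant $d$ supplied by the reduction is the one witnessing the Fact.
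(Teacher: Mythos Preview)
Your proposal is correct and matches the paper's treatment: the paper states this Fact without proof, since it is an immediate restatement of the second clause in the definition of $f\le_O^r g$ under the notational abbreviations you spell out. Your third paragraph about aligning moduli is unnecessary here, as the definition already asserts the ratio bound for every $n$ directly, so no further bookkeeping is needed.
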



To give a non-trivial example of an online reduction we need several definitions.

\smallskip

In classical colouring, Kierstead investigated online colouring
of {\sc Interval Graphs}. A graph $G=(V,E)$ is called a $k$-interval graph
if each vertex $v$
of $G$ can be represented by a  closed subinterval of $[0,1]$
such that if $I_v$ represents $v$ and $I_w$ represents $w$,
then if $vw\in E$, $I_v\cap I_w\ne \emptyset,$ such that
the largest number of intersecting intervals (the \emph{cutwidth})
is at most $k$.
These are exactly the graphs which have \emph{Pathwidth} $\le k$, a
graph metric coming from the Robertson-Seymour minors project (see \cite{RS86a,DF}).

\begin{defi}
Let $ColInt_k$ denote the online problem of colouring a $k$-interval graph. (We leave the precise representation of the problem to the reader.)
\end{defi}

The other online problem is on covering of an interval partial ordering by chains.
 A partial ordering $(P,\leq)$ is called {\em an interval ordering}
if $P$ is isomorphic to $(I,\leq)$
where $I$ is a set of intervals of the real line
and $x\leq y$ iff  the right endpoint of $x$ is left of the
left endpoint of $y$. Interval orderings can be
characterised by the following theorem.

\begin{thm}[Fishburn \cite{Fi}]
\label{fishburn}

Let $P$ be a poset. Then the following are equivalent.
\begin{description}
\item[(a)] $P$ is an interval ordering.
\item[(b)] $P$ has no subordering isomorphic to
${\bf 2}+{\bf 2}$ which is the
ordering of four elements with
$\{a,b,c,d\}$ with $a<b$, $c<d$ and no other
relationships holding.
\end{description}

\end{thm}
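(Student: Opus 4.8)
The plan is to prove the two implications separately; (a)$\Rightarrow$(b) is a short computation with endpoints, which I handle first, while (b)$\Rightarrow$(a) carries the real content and proceeds through a structural lemma about down-sets. For (a)$\Rightarrow$(b), assume $P$ has an interval representation sending each $v$ to an interval with left and right endpoints $l(v)\le r(v)$, so that $x<y$ iff $r(x)<l(y)$. Suppose toward a contradiction that four elements $a,b,c,d$ induce a copy of ${\bf 2}+{\bf 2}$, with $a<b$, $c<d$, and no other relations. From the two comparabilities I get $r(a)<l(b)$ and $r(c)<l(d)$, and from the incomparabilities of the pairs $\{b,c\}$ and $\{a,d\}$ (whose intervals must therefore overlap) I get $l(b)\le r(c)$ and $l(d)\le r(a)$. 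Chaining these gives $r(a)<l(b)\le r(c)<l(d)\le r(a)$, hence $r(a)<r(a)$, a contradiction; so no induced ${\bf 2}+{\bf 2}$ exists.

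For (b)$\Rightarrow$(a), the key lemma is that $(2+2)$-freeness forces the strict down-sets $D(x):=\{y\in P: y<x\}$ to be linearly ordered by inclusion. To prove it I would assume $D(x)$ and $D(y)$ are $\subseteq$-incomparable, choose witnesses $a\in D(x)\setminus D(y)$ and $b\in D(y)\setminus D(x)$, and argue that $\{a,b,x,y\}$ is an induced ${\bf 2}+{\bf 2}$: the four points are distinct (each coincidence forces, via transitivity, one of $a\in D(y)$ or $b\in D(x)$), the relations $a<x$ and $b<y$ hold by choice, and every remaining potential comparability is impossible by transitivity --- for instance $y<a$ would give $b<y<a<x$ and hence $b\in D(x)$, and the other cases are ruled out the same way. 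This contradicts (b), so $\{D(x)\}_{x\in P}$ is a chain under inclusion.

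With the chain $\mathcal{D}=\{D(x):x\in P\}$ in hand, I would assign intervals as follows. Order $\mathcal{D}$ by inclusion and, using that a countable linear order embeds order-preservingly into $\mathbb{Q}\subset\mathbb{R}$, fix a real position $\mathrm{pos}(D)$ for each $D\in\mathcal{D}$. Put $l(x)=\mathrm{pos}(D(x))$ and let $r(x)$ sit at (or just below) the infimum of the positions of the down-sets that contain $x$, i.e.\ of $\{\mathrm{pos}(D(w)): x<w\}$, placing $r(x)$ to the right of everything when $x$ is maximal. Since the down-sets are nested, $\{w: x\in D(w)\}$ is upward closed in $\mathcal{D}$, which makes this threshold well-defined, and since $x\notin D(x)$ we have $l(x)\le r(x)$, so $[l(x),r(x)]$ is a genuine interval. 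The equivalence $x<y\iff r(x)<l(y)$ then collapses to the tautology $x<y\iff x\in D(y)\iff \mathrm{pos}(D(y))$ lies above the threshold defining $r(x)$.

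I expect the main obstacle to be the chain lemma, specifically verifying that the two witnesses yield a genuinely \emph{induced} ${\bf 2}+{\bf 2}$ rather than a configuration carrying an extra comparability; this case analysis is exactly where the forbidden-substructure hypothesis is consumed. A secondary point, absent in the finite case, is the passage to infinite $P$: realizing the abstract chain of down-sets by actual reals relies on countability of $P$ (so the chain embeds into $\mathbb{Q}$) and on choosing the threshold endpoints consistently when the relevant infima are not attained.
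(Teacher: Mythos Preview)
The paper does not give its own proof of this theorem: it is stated as a classical result of Fishburn, cited to \cite{Fi}, and used as a black box in the subsequent discussion of interval orderings and the Kierstead--Trotter chain-covering argument. So there is no ``paper's proof'' to compare against.

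Your argument is essentially the standard one. The direction (a)$\Rightarrow$(b) is correct as written. For (b)$\Rightarrow$(a), the chain lemma on strict down-sets is the right structural core, and your verification that the four witnesses form an induced ${\bf 2}+{\bf 2}$ is fine. The interval construction you sketch can be made to work, but as stated it is a little loose: defining $r(x)$ via an infimum of positions and then placing it ``at or just below'' that infimum risks ambiguity about whether $r(x)<l(y)$ holds with strict inequality exactly when $x<y$. A cleaner route is to observe, by the dual of your lemma, that the strict up-sets $U(x)=\{y:y>x\}$ are also linearly ordered by inclusion; one then uses the rank of $D(x)$ in its chain for the left endpoint and the (reversed) rank of $U(x)$ for the right endpoint, which makes the verification $x<y\iff r(x)<l(y)$ a direct unpacking rather than an infimum argument. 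Your remark about countability is appropriate in this paper's setting, where all structures are countable.
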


The width of an interval ordering $(P,\leq)$ is defined
naturally to be the minimum over all presentations of the maximum number of intervals covering some point of $[0,1]$.
Given an interval ordering $(P,\leq)$ of width~$k$, our goal is to cover it with as few chains as possible; the chains do not have to be disjoint.

Recall that a \emph{chain} in a partial ordering
is a $\le$-linearly ordered subset. A collection of chains
$\{C_1,\dots,C_q\}$
\emph{covers} $(P,\le)$
if each element of $P$ lies in one of the chains.
An \emph{antichain} is a collection of pairwise $\le$-incomparable elements.

\begin{defi}
Let $ChInt_k$ denote the online problem of covering  an interval ordering $(P,\leq)$ of width~$k$  by  chains (which are not necessarily disjoint). We leave the precise representation of the problem to the reader.
\end{defi}

The theorem below gives a non-trivial example of an online ratio-preserving reduction between online problems. The proof of the theorem below is essentially an analysis of the clever argument given in Kierstead and Trotter \cite{KT}.

\begin{thm}\label{thm:O}  For any positive $k \in \mathbb{N}$ there is an online solution $g$ to $ChInt_k$ with a constant performance  ratio
which can be transformed into an online solution $f$ to $ColInt_k$ with the property $f\leq^r_O g$ via a constant $d=1$.


\end{thm}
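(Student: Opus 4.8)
The plan is to observe that, under a suitable common choice of representations, $ChInt_k$ and $ColInt_k$ are two faces of one problem, so that the only real content is the online chain/colour algorithm of Kierstead and Trotter \cite{KT} and the reduction itself becomes essentially the identity with $d=1$. Concretely, I would present instances of both problems as the \emph{same} stream of closed subintervals of $[0,1]$, each vertex arriving together with its two endpoints, so that from any prefix one primitive recursively recovers both the overlap relation of the interval graph and the order relation $x<y\iff x$ lies entirely to the left of $y$. Under this identification a set of pairwise incomparable elements of $(P,\le)$ is exactly a set of pairwise intersecting intervals, i.e.\ a clique of the graph, while a \emph{chain} is a set of pairwise disjoint intervals, i.e.\ an independent set. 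Hence the width $w$ of $(P,\le)$ equals the clique number, equals the cutwidth, and a partition of $P$ into $q$ chains is literally a proper $q$-colouring; since any chain cover thins to a chain partition of no larger size, Dilworth's theorem gives $g_{\mbox{off}}(B(\sigma))=\chi_{\mbox{off}}(\delta(\sigma))=w$ at every height.

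Next I would obtain $g$ by invoking the Kierstead--Trotter online algorithm: as each interval arrives it is assigned to one of at most $3w-2\le 3k-2$ chains, where $w$ is the largest clique among the already-presented intervals; the assignment depends only on the finite prefix seen so far (its ``level'' is set by this current clique number, and one of three sub-chains is chosen by a left-fit rule). This map is height-preserving and primitive recursive, so it is an online solution in the sense of Definition~\ref{def:maininf0}, and by the previous paragraph its performance ratio is at most $(3w-2)/w<3$, a constant independent of the instance.

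Finally I would let $B$ be the height-preserving reinterpretation of a $ColInt_k$-stream as the $ChInt_k$-stream above, and let $A(\alpha\uh n, g(B(\alpha\uh n)))$ output the chain labels computed by $g$ verbatim as colours. Then $f(\alpha\uh n)=A(\alpha\uh n, g(B(\alpha\uh n)))$ is an online solution to $ColInt_k$, and because the number of chains used by $g$ on $B(\alpha\uh n)$ equals the number of colours used by $f$ on $\alpha\uh n$ at every stage, while the two offline optima coincide, the ratios $c(f\uh n)/c(f_{\mbox{off}}\uh n)$ and $c(g\uh n)/c(g_{\mbox{off}}\uh n)$ are equal; thus $f\le^r_O g$ with $d=1$.

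The step I expect to be the main obstacle is verifying that the Kierstead--Trotter mechanism actually attains the $3w-2$ bound while remaining genuinely online and height-preserving (rather than merely total, or online only with lookahead), and that the cost correspondence ``chains $=$ colours'' holds \emph{exactly at every finite stage}; it is precisely this stage-by-stage equality, together with choosing representations so that $B$ needs no lookahead (the ideal $|\sigma'|=|\sigma|$ case), that yields $d=1$ rather than merely some constant. Reconstructing the level/sub-colour bookkeeping of \cite{KT} in the primitive recursive, height-indexed framework is where the ``analysis of the clever argument'' is genuinely spent.
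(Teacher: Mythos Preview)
Your approach and the paper's diverge on a point you treat as a free choice: the representation of $ColInt_k$. You present both problems as the \emph{same} stream of closed intervals with explicit endpoints, so that $B$ is essentially the identity and the chain/colour correspondence is automatic; the only work you leave yourself is checking that the Kierstead--Trotter procedure is height-preserving and primitive recursive. The paper, by contrast, treats $ColInt_k$ as an online \emph{graph}-colouring problem: at step $n$ a vertex arrives together with its adjacencies to earlier vertices, and no interval data is supplied. Under that (natural) representation your identity map $B$ does not type-check --- from adjacency alone one cannot online recover endpoints, nor even a consistent orientation of the comparability relation into an interval order.

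The paper's key observation, and the actual substance behind ``an analysis of the clever argument in \cite{KT}'', is that the Kierstead--Trotter chain-covering procedure, although stated for interval \emph{orders} $(P,\le)$, never inspects the direction of any comparability: the inductive definition of the set $B$ via width, the antichain arguments (Lemmas~\ref{km1} and the width-of-$A$ lemma), and the greedy $3$-chain cover of $A$ all use only whether two elements are comparable. Since comparability is exactly non-adjacency in the interval graph, the algorithm can be simulated from the graph data alone, and \emph{this} is what furnishes the reduction with $d=1$. Your write-up never isolates this comparability-only property, and your declared ``main obstacle'' is not where the paper locates the difficulty. If you grant yourself explicit endpoints in the input to $ColInt_k$, your argument is correct but proves the theorem for a strictly easier presentation; to match the paper you must either argue that the graph-only input suffices (which forces you through the comparability observation) or acknowledge that your representation choice is doing real work.
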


\begin{cor}[Kierstead and Trotter \cite{KT}] There is an online algorithm to colour $k$ interval graphs with a constant competitive ratio.  
\end{cor}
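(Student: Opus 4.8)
The plan is to read the Corollary as an immediate consequence of Theorem~\ref{thm:O}, and to locate all the real content in that theorem. Indeed, Theorem~\ref{thm:O} produces an online solution $f$ to $ColInt_k$ satisfying $f\leq^r_O g$ for an online solution $g$ to $ChInt_k$ of \emph{constant} performance ratio, so by Fact~\ref{fact:red} (with $d=1$) the ratio of $f$ at every level is bounded by that of $g$, hence by a constant. Since a solution to $ColInt_k$ is exactly an online colouring of a $k$-interval graph, this is the Corollary. So I would spend the effort establishing Theorem~\ref{thm:O}, which splits into (i) building $g$ with a constant ratio, and (ii) transferring $g$ to $f$ through a height-preserving change of representation with $d=1$.

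For (ii) I would use the complementation correspondence between interval graphs and interval orders. Fixing a representation of $ColInt_k$ in which each arriving vertex is presented together with (a combinatorial encoding of) its interval, the map $B$ reads off from $\alpha\uh n$ the interval order $(P,\le)$ on the first $n$ intervals, where $x<y$ iff the interval of $x$ lies entirely to the left of that of $y$; thus incomparability in $P$ is exactly overlap in the graph, and $B$ is primitive recursive and height-preserving. A chain in $(P,\le)$ is a set of pairwise-comparable, hence pairwise-disjoint, intervals, i.e.\ an independent set of the interval graph; so a cover of $P$ by chains is literally a proper colouring (after assigning each vertex to one of its covering chains, which does not increase the count). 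Accordingly I take $A$ to be the online, height-preserving relabelling that turns the chain indices output by $g(B(\alpha\uh n))$ into colours, giving $f(\alpha\uh n)=A(\alpha\uh n,g(B(\alpha\uh n)))$ with $A,B$ type~II online computable as required by $\leq^r_O$.

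For (i) I would replay the Kierstead--Trotter online chain-partition algorithm~\cite{KT}. Processing the intervals of $(P,\le)$ in arrival order, the algorithm assigns each new interval to a chain according to a bounded ``rank'' statistic that records the local interleaving of the interval with previously seen intervals to its left; the chains are grouped into blocks indexed by this rank, and a First-Fit rule is applied inside each block. The decisive point of \cite{KT} is the invariant that a new interval of rank $r$ can always be placed into one of only three chains reserved for block $r$, whence the total number of chains never exceeds $3k-2$ on an instance of width $\le k$. I expect reconstructing and verifying this invariant to be the main obstacle, since it is the only place where the genuine combinatorics of interval orders enters; everything else is formal bookkeeping. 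Granting it, and noting that by Dilworth's theorem the offline minimum chain cover of the height-$n$ sub-instance is exactly its width $k_n$, the performance ratio of $g$ at level $n$ is at most $(3k_n-2)/k_n<3$, a constant.

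Finally I would pin the reduction constant to $d=1$ by matching costs level by level. The colouring produced by $f$ uses exactly as many colours as $g$ uses chains, so $c(f\uh n)=c(g\uh n)$. For the offline optima, the maximum antichain of the interval order on the first $n$ intervals is the largest family of pairwise-overlapping intervals, i.e.\ the clique number of the height-$n$ interval graph; since interval graphs are perfect this equals $\chi_{\mbox{off}}$, and by Dilworth it also equals the offline minimum chain cover, so $c(f_{\mbox{off}}\uh n)=c(g_{\mbox{off}}\uh n)$. Hence the two ratios in Fact~\ref{fact:red} are equal and $d=1$ suffices. Combining with the constant bound on the ratio of $g$ from step (i) gives $f$ a competitive ratio at most $3$, which is the Corollary and recovers the theorem of Kierstead and Trotter.
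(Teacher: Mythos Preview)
Your high-level derivation of the Corollary from Theorem~\ref{thm:O} via Fact~\ref{fact:red} is exactly the paper's, and your $d=1$ computation (matching chain count to colour count, and width to clique number to chromatic number via perfection and Dilworth) is correct and more explicit than what the paper writes.

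The genuine difference is in your step~(ii). You fix a representation of $ColInt_k$ in which each vertex arrives \emph{with its interval}, so that $B$ can simply read off the left--right order and produce the interval poset. The paper instead treats $ColInt_k$ as presented by the \emph{graph} alone: at each step you see only which earlier vertices the new vertex is adjacent to. From this you can recover comparability in the associated interval order (non-adjacent $\Leftrightarrow$ disjoint $\Leftrightarrow$ comparable), but \emph{not} the direction of the order. The paper's point---stated explicitly as the last sentence of its proof of Theorem~\ref{thm:O}---is that the Kierstead--Trotter chain-covering procedure can be executed using comparability information only: the definition of $B=\{p:\text{width}(B^p\cup\{p\})\le k\}$ and the greedy cover of $A=P\setminus B$ never ask which of two comparable elements is smaller. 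This is what makes the reduction work against the graph-only presentation. Your representation choice is defensible and your argument is correct under it, but it sidesteps precisely the observation the paper isolates; the paper's route buys the stronger conclusion that the colouring algorithm needs no interval data.

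A smaller difference: for step~(i) the paper does not use a ``rank plus First-Fit within blocks'' description. It argues by induction on $k$, peeling off $A=P\setminus B$ and proving two short lemmas (maximal antichains in an interval order are $\le$-comparable; $A$ has width $\le 2$ and every element of $A$ is incomparable with at most two others) from the ${\bf 2}+{\bf 2}$-free characterisation. Your block-rank picture is essentially the unrolled induction, but the invariant you flag as the main obstacle is established in the paper via these structural lemmas rather than by a First-Fit analysis.
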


\begin{proof}[Proof of Corollary] Kierstead  and Trotter \cite{KT}
 showed that every 
online interval ordering
of width~$k$ can be online covered by
$3k-2$ many chains.

Since $ColInt_k  \leq^r_O ChInt_k$ and is witnessed via a reduction with constant $d =1$,
it remains to apply Fact~\ref{fact:red}. \end{proof}

\begin{proof}[Proof of Theorem~\ref{thm:O}]
The basic idea is quite simple. Take our online $k$ interval graph,
turn it into an online interval ordering of width~$k$, and then
consider that chain covering as a colouring. However, to see that this idea works, we need to argue that
there is an online solution $g$ to the interval chain covering problem
which uses only the
information about comparability of various elements, and not their ordering.

We first prove the following. Suppose that $(P,\leq )$ is a online interval ordering
of width~$k$. Then $P$ can be online covered by
$3k-2$ many chains. We need the following lemma whose proof is
fairly straightforward. For a poset $P$, and subsets $S,T$,
we can define $S\leq T$ iff
for each  $x\in S$ there is some $y\in T$
with $x\leq y$. (Similarly $S|T$ etc.)

\begin{lem}\label{km1}
If $P$ is an interval order and $S,T\subset P$ are maximal
antichains the either $S\leq T$ or $T\leq S$.\end{lem}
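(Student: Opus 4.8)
The plan is to argue by contradiction, extract a forbidden ${\bf 2}+{\bf 2}$ subordering, and invoke Fishburn's characterisation (Theorem~\ref{fishburn}). Suppose neither $S\leq T$ nor $T\leq S$. Unwinding the definition of $\leq$ on subsets, the failure of $S\leq T$ yields some $x\in S$ with $x\not\leq y$ for every $y\in T$, and symmetrically the failure of $T\leq S$ yields some $d\in T$ with $d\not\leq z$ for every $z\in S$. Instantiating $y=d$ and $z=x$ shows $x$ and $d$ are incomparable, and the same inequalities force $x\notin T$ (else $x\leq x$ with $x\in T$) and $d\notin S$.

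First I would exploit maximality to produce the two ascending pairs that will become the disjoint chains of the ${\bf 2}+{\bf 2}$. Since $T$ is a maximal antichain and $x\notin T$, the element $x$ is comparable to some $t\in T$; as $x\not\leq t$ and $x\neq t$, this comparability must be $t<x$. Dually, maximality of $S$ together with $d\notin S$ yields some $s\in S$ with $s<d$.

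Then I would rule out every degeneracy, using that $S$ and $T$ are antichains. If $s=x$, then $s<d$ would read $x<d$, contradicting $x\not\leq d$; hence $s\neq x$, so $s\mid x$. Dually $t\neq d$, giving $t\mid d$, and $x\mid d$ was already noted. It remains to establish $s\mid t$: if $s<t$, then $s<x$ via $t<x$, contradicting that $s,x$ are distinct members of the antichain $S$; if $t<s$, then $t<d$ via $s<d$, contradicting that $t,d$ lie in the antichain $T$. Hence $s\mid t$, and the four elements $s,t,x,d$ realise precisely the relations $s<d$ and $t<x$ with nothing else holding, i.e.\ a copy of ${\bf 2}+{\bf 2}$, contradicting Theorem~\ref{fishburn} and proving the lemma.

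The argument is essentially bookkeeping; the only real obstacle is organising the case analysis so that every possible coincidence or stray comparability among $s,t,x,d$ is excluded, since a single unchecked relation would collapse the intended ${\bf 2}+{\bf 2}$. I expect no difficulty beyond keeping this exhaustive.
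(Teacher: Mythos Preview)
Your proof is correct and is exactly the argument the paper has in mind: the paper omits the proof, calling it ``fairly straightforward,'' having just stated Fishburn's characterisation (Theorem~\ref{fishburn}), and your contradiction via an explicit ${\bf 2}+{\bf 2}$ built from the two witnesses $x,d$ and the elements $t,s$ supplied by maximality is the intended route. The only cosmetic omission is that in verifying $s\mid t$ you should also note $s\neq t$ (which follows since $s=t$ would give $s=t<x$, contradicting $s\mid x$); otherwise the bookkeeping is complete.
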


The algorithm for chain covering uses induction on $k$.
We consider the vertices as $1,2,\dots$  with $p$ added at step $p$.
If $k=1$ then $P$ is a chain, and there is nothing to prove.
Suppose the result for $k$, and consider $k=1$.
We define $B$ inductively by
$$B=\{p\in P: \mbox{width}(B^p\cup \{p\})\leq k\}.$$
Here $B^p$ denotes the amount of $B$ constructed by step $p$ of the
online algorithm.
Then $B$ is a maximal subordering of $P$ or width $k$.
By the inductive hypothesis the algorithm will have covered
$B$ by $3k-2$ chains.
Let $A=P-B$. Now
it will suffice to show that $A$ can be covered by 3 chains, and then
these will be covered by the greedy algorithm.

To see this it is enough to show that
every elements of $A$ is incomparable with at most two other elements of
$A$.
Then the greedy algorithm  will cover $A$, as we see elements {\em not} in~$B$.

\begin{lem} The width of $A$ is at most 2.\end{lem}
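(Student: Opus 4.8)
The statement to be proved is that the width of $A = P - B$ is at most 2, where $B$ is the maximal subordering of width $k$ constructed online and $A$ consists of exactly those vertices $p$ whose arrival would have pushed the width of $B$ above $k$. The natural approach is a proof by contradiction: suppose $A$ contains an antichain $\{a_1, a_2, a_3\}$ of three pairwise incomparable elements, and derive a contradiction with the width bound on $P$ or with the maximality property defining $B$. The key structural fact I would exploit is Lemma~\ref{km1}, which says that for maximal antichains $S, T$ in an interval order, either $S \leq T$ or $T \leq S$; this linearity of antichains is exactly what the interval-order characterization (no ${\bf 2}+{\bf 2}$, Theorem~\ref{fishburn}) buys us.

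\textbf{Key steps.} First I would unpack what it means for $p \in A$: at the step when $p$ arrived, adding $p$ to $B^p$ would have created an antichain of size $k+1$ inside $B^p \cup \{p\}$, so there is a witnessing antichain of $k$ elements in $B$ all incomparable to $p$. Second, I would consider three elements $a_1, a_2, a_3 \in A$ forming an antichain and look at their associated witnessing antichains $W_1, W_2, W_3 \subseteq B$, each of width $k$. Using Lemma~\ref{km1} (and the absence of a ${\bf 2}+{\bf 2}$ pattern), I would argue that these witnessing antichains, together with the three $a_i$, cannot be simultaneously consistent with the global width of $P$ being only $k$: the interval-order structure forces the $a_i$ to be ``threaded through'' the linearly ordered family of maximal antichains of $B$ in a way that, with three mutually incomparable $a_i$, produces an antichain of size exceeding $k$ somewhere, or else forces one of the $a_i$ to have actually fit into $B$ (contradicting $a_i \in A$). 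The crucial use of the interval property is that incomparability is, in a strong sense, ``interval-like'': if $a$ is incomparable to a large antichain and $a'$ is too, their relationship is tightly constrained.

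\textbf{The main obstacle.} The hard part will be the combinatorial bookkeeping in the contradiction step — precisely pinning down \emph{why} three incomparable elements of $A$ are impossible while two are allowed, i.e. getting the constant $2$ rather than $1$ or $3$. This is where the finitary ${\bf 2}+{\bf 2}$-free structure must be pushed hardest: I expect the argument hinges on showing that each $a_i \in A$ is comparable to ``all but a bounded tail'' of $B$ on one side, so that incomparabilities among the $a_i$ translate into overlapping witnesses that inflate the width. I would aim to reduce everything to Lemma~\ref{km1} applied to the maximal antichains straddling the $a_i$, and track how an extra incomparable element forces the appearance of a forbidden ${\bf 2}+{\bf 2}$ or a width-$(k+1)$ antichain in $P$. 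The precise value $2$ should fall out of the fact that an element outside $B$ can be incomparable to its immediate ``predecessor'' and ``successor'' antichain representatives but no more without violating width $k$ globally.
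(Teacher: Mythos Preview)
Your setup matches the paper's exactly: take three elements $q,r,s\in A$, pick witnessing $k$-element antichains $Q,R,S\subseteq B$ with $q|Q$, $r|R$, $s|S$, extend them to maximal antichains, and use Lemma~\ref{km1} to assume $Q\leq R\leq S$. So far so good.

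Where you drift is in the mechanism of the contradiction. First, a small slip: the ambient poset $P$ has width $k+1$, not $k$ (we are in the inductive step). More importantly, the paper does \emph{not} produce an over-large antichain in $P$ or force one of the $a_i$ back into $B$, and it does not argue anything like ``comparable to all but a bounded tail of $B$ on one side''. The argument is direct: assuming $r|q$ and $r|s$, one shows $q<s$, so the three cannot form an antichain. The chain of deductions is short and worth knowing. Since $\{q,r\}\cup R$ has $k+2$ elements and $\mathrm{width}(P)\le k+1$, $q$ is comparable to some $r'\in R$; since $q|Q$ we have $r'\notin Q$; since $\mathrm{width}(B)\le k$, $r'$ is comparable to some $q'\in Q$; from $Q\le R$ one gets $q'\le r'$, and then $q|q'$ forces $q\le r'$. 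Symmetrically one finds $r''\in R$ with $r''\le s$. Finally the ${\bf 2}+{\bf 2}$-free property lets one take $r'=r''$, giving $q<s$.

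So your plan would work, but only after you replace the speculative ``bounded tail'' picture with this concrete pivot through the middle antichain $R$. The constant~$2$ is not coming from any tail-counting; it comes from the fact that the middle element $r$ forces its two neighbours to thread through $R$ in opposite directions.
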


\begin{proof}
To see this, consider 3 elements
$q,r,s\in A$. Then there are antichains $Q,R,S$ in~$P$
of width $k$
with $q|Q$, $r|R$ and $s|S$.
Moreover these can be taken as maximal antichains.
Applying Lemma \ref{km1}, we might as well suppose
$Q\leq R\leq S$.
Suppose that $r|q$ and $r|s$. Then
we prove that $q<s$.
Since $q|r$ and width$(P)\leq k+1$,
there is some $r'\in R$ with $q$ and $r'$ comparable.
Since $q|Q$, $r'\not\in Q$.
Since the width of $B$ is $\leq k$,
there is some $q'\in Q$ $q'$ and $r'$ comparable.
Since $Q\leq R$, there is some $r_0\in R$ with
$q'\leq r_0$.
Since e$R$ is an antichain, $q'\leq r'$.
Since $q|q'$, $q\leq r'$.
Similarly, there exists
$r''\in R$ with $r''\leq s$.
Since $P$ does not have any ordering isomorphic to ${\bf 2}+{\bf 2}$,
we can choose $r'=r''$, and hence $q<s.$\end{proof}

Now we suppose that
$r,q,s,t$ are distinct elements of $A$ with
$q|\{r,s,t\}$.
Then without loss  of generality $r<s<t$ since the width of $A$ is
at most 2.
Since $s\in A$ there is an antichain $S\subset B$ of length $k$\
with $s|S$.
Since $s|q$, and width$(P)\leq k+1$,
$q$ is comparable with some element $s'\in S$.
If $s'<q$, then $s'|r$ and hence the suborder $\{s',q,r,s\}$ is
isomorphic to ${\bf 2}+{\bf 2}$.
Similarly, $q<s'$ implies
$s'|t$ and then the subordering
$\{q,s',s,t\}$ is isomorphic to
${\bf 2}+{\bf 2}$. Thus there cannot be 4 elements
$r,q,s,t$  of $A$ with
$q|\{r,s,t\}$.
Hence $A$ can be covered by 3 chains.

It is easily see that the procedure above uses only comparability of intervals. Thus, the theorem follows. \end{proof}

\begin{prob} Investigate online reduction between online algorithms in the literature.
\end{prob}

We also expect that the online reduction may lead to new online algorithms based on the already existing ones.

Also graphs with constrained decompositions such as those of bounded treewidth,
pathwidth, clique-width, etc have been extensively studied in the literature, and particularly combine well with algorithmic meta-theorems (see e.g.
Downey-Fellows \cite{DF}, Flum and Grohe \cite{FG}, Grohe \cite{Gr} for a sample).

One example is given by $k$-interval graphs met above which are those of
pathwidth $\le k.$ A $G$ of pathwidth $k$  has  a
\emph{path decomposition} which is a collection
of sets of vertices $V_1,\dots,V_n$
all of size $\le k+1$
such that
for all vertices $v\in V(G)$, there is at least one $i$ with $v\in V_i$,
if $xy\in E(G)$, then for some $i$, $\{x,y\}\subseteq V_i$
and finally if $x\in V_i$ and $x\in V_j$ (with $i<j$)
the for all $q\in [i,j]$, $x\in V_q$. The last property is
called the \emph{interpolation property}, and
says that pathwidth is kind of a measure of how far you are from
being either a grid or a clique.

Now \emph{given} such a path decomposition, and some
optimisation property we want to solve (such as for the largest clique),
if the property is definable in monadic second order logic (even with
counting), then
we can solve the problem by dynamic programming (actually using special automata)
beginning at $V_1$ and finishing at $V_n$ by the methods of Courcelle \cite{DF,FG,Gr}.

\begin{prob}
Investigate the extent to which this dynamic programming is online. Presumably,
it will be online for properties defined by monadic second order counting logic with counting modulo some kind of delay.
\end{prob}

Moreover, as we have seen above for the special case of colouring above, we get a constant ratio approximation algorithm, for a graph of pathwidth $k$,
no matter how we are given the online presentation. The difference is
that if we are a given a path decomposition as the presentation, then $k+1$
colours will suffice. But perhaps the methods for colouring are more general.
The point is that graphs of bounded pathwidth have very constrained structure.

\begin{prob} Investigate the approximability of monadic second order definable
properties on graphs of bounded pathwidth, but given as arbitrary
online presentations.
\end{prob}

The same can be asked for graphs of bounded treewidth
which has the same definition as pathwidth, but the structure of the
decomposition is a tree and not a path. These also have
dynamic programming algorithms, but are always
\emph{leaf to root}, whereas even given a tree decomposition as an
online root to leaf structure, presumably some kind of
algorithm will work, but it will no longer be automatic.
This seems a great area to pursue.

Also related seems the idea of online parameterised problems \cite{DM,DM1},
where we want an online solution to a problem with a fixed parameter.
For example, $k$-{\sc Vertex Cover} asks for a collection
of vertices where each edge of a graph includes at least one of the
vertices, and this is polynomial time for a fixed $k$.
This is also online polynomial time for a fixed $k$ by the following simple
algorithm (so long as we are allowed $2^k$ many possible solutions).
We can use the following simple method of building a tree of height $2^k$
by taking an edge, and branching on that edge, and then deleting the covered edges, and repeating. This process is also online. In \cite{DM,DM1}
Downey and McCartin showed that the online view brings to the
other parameters
such as what they call \emph{persistence}
which characterises the extent to which a path decomposition does not resemble
a fuzzy ball. The point is that online algorithms point at new parameters of
a problem which deserve attention, in the same way that parameterised
complexity showed that parameters allow a more fine grained
understanding of the computational complexity of a  combinatorial problem.

\section{$\Delta_2^0$ processes, finite reverse mathematics, and Weihrauch reduction}
\label{delta}
Imagine we are in a situation where the data we are dealing with
is so large that we cannot see it all. At each stage $s$ our goal is to build a solution $f$ to some problem.
But there might be no hope of giving a fixed solution at each stage $n$, and  like a Triage Nurse making an ordering for patients to obtain medical attention,
we would update our solution as more information becomes available.
So for each $n\le s$ we would be computing $f(n,s)$ from the finite information
$\sigma$ with $|\sigma|=n.$
For simplicity we state the next definition for combinatorial problems
with totally disconnected representations, and take $2^\omega$ as
the representing example.

\begin{defi}
A \emph{limiting online algorithm}
on $2^\omega$ is a computable function $A$ such that for each
$s$, $A(\alpha\uh s)$ computes a string $\{f_A(n,s)\mid n\le s\}$
such that
$\lim_s f_A(n,s)$
exists for each~$n$.

 As usual we would have $A(\alpha\uh g(s))$ for the
$g$-online version.

\end{defi}

We can then compare combinatorial problems by how fast their limits
converge.

\begin{defi} We say that algorithm $A\le_{O,lim} B$ if
there is an online Weihrauch reduction of $A$ to $B$ such that
$f_B(n,s)=f_B(n,t)$ for all $t\ge s$ implies
$f_A(n,s)=f_A(n,t)$ for all $t\ge s$.
\end{defi}

This gives a fine grained measure of the complexity of combinatorial problems.
For example, consider the
``theorem'' that every finite binary tree of height $n$ has a path of length
$n$. Then we can consider the existence of a uniform function
$A$ which takes a given binary tree of
height $n$ to a path.
This is an online limit problem where the underlying space $X$ is
that with nodes generated by
the collection of binary trees of height $n$ at level
$n$.
The completion of this will represent paths through infinite binary
trees.

\begin{rem} We could argue that the Reverse Mathematics principle
$WKL_0$ which states that every infinite binary tree has a path,
is equivalent to
the statement that there is a  limiting online algorithm for
finding paths which works on~$X$.
We call this \emph{limiting online paths.}
\end{rem}

A binary tree $T$ of height  $n$ is called \emph{separating} if
for each $j\le n-1$, for any node $\sigma$ on
$T$ of height $j$, and $i\in \{0,1\}$,
if $\sigma * i$ does not have
an extension in $T$
of height $n$, then for all $\tau$ of length $j$, neither
does $\tau * i$. Let $X_S$ be the totally disconnected space representing
the collection of all separating finite trees.
The following is a online interpretation and refinement of the
classical fact that Weak K\"onig's Lemma is
equivalent to Weak K\"onig's Lemma for separating classes.

\begin{prop} There is a $(2^{n+1}-2)$-limiting online reduction
which finds limiting online paths in $X$ from those in $X_S$.
\end{prop}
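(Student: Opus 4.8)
The plan is to exhibit the reduction as an online Weihrauch pair $(A,B)$ in which $B$ turns an arbitrary online binary tree into a separating one and $A$ reads a path off the separating tree back into a path of the original, and then to account the mind changes coordinate-by-coordinate so as to obtain the bound $2^{n+1}-2$. The reduction should be from the (harder) general problem $X$ to the special case $X_S$, so that together with the trivial inclusion it witnesses the online/limiting analogue of the fact that Weak K\"onig's Lemma is equivalent to its separating version.

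First I would describe $B$. Fix the breadth-first enumeration $w_1,w_2,\dots$ of the non-root nodes of $2^{<\omega}$ (so that each $w_j$ precedes its children), and let the tree $S=B(T)$ have one coordinate $j$ per node $w_j$. The only information about $T$ that is available online and \emph{monotonically} is \emph{death}: as $T$ is revealed level by level, a node $w_j$ of height $d$ is declared dead once it fails to extend to the current height, and once dead it stays dead. I would build $S$ rectangularly by keeping both directions live at coordinate $j$ until $w_j$ is seen to die, at which moment the branch recording ``$w_j$ is alive'' is no longer extended (equivalently, $w_j$ is enumerated into the $\Sigma^0_1$ set forcing coordinate $j$ to the value $0$). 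Since the decision to kill a direction at coordinate $j$ depends only on the monotone death information about $T$ and not on the earlier coordinates of a path, the set of surviving directions at each level is uniform, so $S$ is genuinely separating in the sense of the definition above, and its presentation is strict and online, indeed primitive recursive.

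Next I would define $A$. By the Weihrauch format $A$ may read both $T$ and a path $c$ through $S$ returned by the $X_S$-solver; from these it computes the \emph{leftmost live} path of $T$: starting at the root, which is live because $T$ is infinite, at each node it moves to the left child unless that child has already been declared dead (by $c$ together with the revealed portion of $T$), in which case it moves right. Coherence of liveness, namely that a live node always has a live child, guarantees this never gets stuck, so the output is an honest path of $T$. Finally I would verify the two limiting properties. For convergence preservation one checks that the leftmost live path at level $k$ can move only when the death status of a node of height $\le k$ on the current branch changes, i.e.\ when the corresponding coordinate of $c$ settles; hence once the solver's output has stabilised on the coordinates feeding level $n$, our output has stabilised up to level $n$, which is exactly $A\le_{O,lim}B$. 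For the numerical bound I would count: the output path is revised only when some $w_j$ on the current branch is newly found dead, each of the $2^{n+1}-2$ non-root nodes of $2^{\le n}$ is found dead at most once (death is monotone), and each such event triggers at most one re-routing of the path restricted to the first $n$ levels; summing yields at most $2^{n+1}-2$ revisions, the claimed bound.

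I expect the main obstacle to be the first step: arranging that $S$ is simultaneously (i) separating, i.e.\ rectangular, (ii) presentable strictly and online from $T$ using only the monotone death information, and (iii) faithful, in that \emph{every} path the adversarial solver might return still yields a genuine path of $T$. The first two are reconciled by keying all killing of directions to deaths in $T$, a $\Sigma^0_1$ event that is uniform across a level; faithfulness is secured by letting $A$ also consult $T$ and by the fact that killed branches force the returned path off the dead nodes. The truly delicate point is pinning down the bookkeeping so that the revision count is \emph{exactly} one per node-death and no more, which is what makes the reduction $(2^{n+1}-2)$-limiting rather than merely limiting.
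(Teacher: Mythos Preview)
Your overall plan---build a separating tree $S$ from $T$ with one level per non-root node, then decode the solver's path back to a path in $T$---matches the paper and gives the right height correspondence. But your encoding/decoding pair has a genuine gap. Your $B$ kills only the ``alive'' direction at coordinate $j$ when $w_j$ dies and never the other one, so the all-zeros string is always a valid path through $S$. An adversarial $X_S$-solver may return $c=0^\omega$, which is stable from stage $0$ yet carries no information about $T$; your $A$ then falls back entirely on the revealed portion $T_s$ to compute the leftmost-live path, and that keeps moving as more of $T$ is revealed. Concretely, take $T$ whose only infinite branch is $1^\omega$ with stubs $1^k0$: the solver outputs $c_j=0$ for all $j$ from the start, but the $n$-th bit of your leftmost-live path flips from $0$ to $1$ only when $1^{n-1}0$ is seen to die. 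The required implication ``$f_B$ settled $\Rightarrow$ $f_A$ settled'' fails, so this is not a limiting online reduction at all. The sentence ``the death status of a node changes, i.e.\ when the corresponding coordinate of $c$ settles'' is exactly where the argument breaks: $c_j$ settling to $0$ does \emph{not} mean $w_j$ has died, and $w_j$ dying later does not force $c_j$ to move if it was already $0$.

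The paper's (sketchy but standard) construction avoids this by encoding \emph{decisions} rather than liveness: level $j$ of $H$ records which child to take at the corresponding node of $T$, and one kills direction $i$ at that level when that child is seen to die (before its sibling). The decoding is then $h(n)=c_j$ where $\sigma_j=h\uh n$, read off from $c$ alone with no further reference to $T_s$; once $c$ stabilises on the first $2^{n+1}-2$ levels, $h\uh n$ is determined and cannot move. This is also why the $2^{n+1}-2$ in the statement is the \emph{height blowup} (how many levels of $H$ feed level $n$ of the $T$-path), not a bound on the number of mind-changes as your final paragraph reads it.
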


\begin{proof} We remind the reader of  how this proof works.
Suppose we have a tree $T_s$ of height $s$.
In an online fashion,
we will generate a tree $H$ of height
$2^{s+1}.$ This is done inductively. At step 1,
we can think of the nodes labeled $0$ and $1$ in $T$ as being
represented by $0$ and $1$ in $H$.
At step 2, in $T$ it is possible for us to have
$00,01,10,11$ and these are represented by 4 levels in $H$,
with height 2 representing $00,$ level 3 $01,$ level 4 $10,$ and level 5 $11.$
Now we continue inductively.
This makes level $n$ of $T$ correspond to
trees of height $2+4+\dots+2^n=2^{n+1}-2$.
As the construction proceeds, if some
$\sigma$  fails to have an extension at length $s$, in $T_s$, there will be some
shortest $\sigma'\preceq \sigma$ which fails to have
a length $s$ extension in $T_s$.
Then in $H_s$, we don't extend to length $s$ (from length $s-1$)
all paths corresponding to $\nu * j$ with $j$
representing $\sigma'$ in $H_{s-1}.$

Consider any limiting online algorithm for finding  a path
for path $\alpha$ corresponding to $H$, in $X_0$,
This naturally and in a online way allows us from level $2^{s+1}$ to generate
an online path in $T_s$, and is clearly a limiting online reduction.
\end{proof}

\begin{prob}
Figure out the smallest
$g$ in place of $2^{s+1}$ in the reduction above, which would give a \emph{precise measure}
of how tight the reverse mathematics relationship is.
\end{prob}


There seems a whole research programme available here.
For example, we could be given an online bipartite graph
$B_\sigma$ for $\sigma\prec \alpha$.
We either have to build a complete matching or demonstrate that Hall's
condition fails.
One representation of this problem
will involve a compact space
where the nodes are   bipartite graphs of height $2n$, say,
and  where the paths
all represent graphs which obey Hall's condition.
The online operator will act on this compact tree of representations for graphs
$B_\sigma$.
Now as the process goes along, we might have to update the solution at hand.
That is, the online process has $B_\sigma\mapsto M_\sigma$,

One intriguing example is that of finding a basis in a vector space.
In the case that
the vector space is over the rationals, then presumably this will
correlate to some principle like ACA$_0$. But consider a finite field such as
GF(2). We know that RCA$_0$ proves that we can find  a basis for a vector space
for this field.
But it is not hard to construct an online vector space over GF(2)
for which there is no
online algorithm to do this, unless we have a computable delay.
Comparing the online complexity of such problems with such computable
delay would see to give significant insight into the fine structure of reverse
mathematics. In this particular case, we also note that
a polynomial time algorithm for finding a basis of a polynomial time
vector space was proven to be equivalent to $P=NP$
suggesting intriguing connections with complexity theory.
There is some relevant work by Hirst and Mummert \cite{HirstMum}, who have proved that finding the basis of a vector space has the Weihrauch complexity of lim, i.e., is on the second level of the Borel hierarchy.

We remark that there are many processes that have been investigated
and fall under the model we have introduced. One such example is
algorithmic learning theory, such as $EX$-learning (Gold \cite{Gol}).
Here one is presented with $a_0,a_0,\dots$ values for
a function $f(0), f(1),\dots,$
and we need to eventually print out an index for $\varphi_e=f$ from
some point onwards. This is clearly an example of an online algorithm, and fits into this section as a limiting algorithm.
There are interesting connections between these ideas and reverse mathematics; see, e.g., \cite{Br1,Br2, Br3, HoJSt}.
For online learning in computer science, see~\cite{Sh}.

  Another area which could be incorporated would be asynchronous computing.
Here we have a series of agents $A_1,\dots, A_k$ communicating
through asynchronous channels, and attempting to
compute a set of functions $f_1,\dots, f_k$, where there might
be e.g. some kind of crash failure meaning that one of the agents
dies and stops sending signals. For example, the
{\sc Consensus} problem asks for all the $f_i$'s which have not crashed to
give the same value. A \emph{run} could be represented in
a space of possible communications and failures.
There are a number of reductions which have been
produced in this area, showing that Consensus is a certain
kind of minimal failure, and other problems can be solved if
Consensus can (Chandra and Toueg \cite{CTo}).
It would be interesting to see if these results can be placed in the hierarchy
of online limiting reductions, since they appear
to look like online limiting reductions.

Finally, one exciting possibility would be to include randomization
in this setting. Randomized online algorithms are quite common in
practice (see e.g. Albers \cite{Albers}). For this we could
use the theory of algorithmic randomness (see \cite{DH,LV,Nies}) easily.
For example, an online algorithm with randomized advice (i.e. representing
a coin toss at each stage) could be done via (using $2^\omega$ as  a
representative space) by considering
online algorithms from
$2^\omega\times 2^\omega
\to S$, with $S$ some solution space, with the first copy of
$2^\omega$ representing the problem, the second representing
``advice'' strings and $S$ the solution space.
The online algorithm could take $(\sigma,\tau)\to s_n$, and
would run on extensions of $\tau$ provided that
$[\tau]$ avoids some algorithmic randomness test, such as
a Martin-L\"of test. Using oracles we could also tie this
to the  theory of algorithmic randomness using the
``\emph{fireworks}'' method of Shen (see
Bienvenu and Patey \cite{BP}). Similar approach has been implemented for offline algorithms in \cite{BGH}, and for several interesting results more closely related to our online setting see~\cite{BHK}. In the online situation, these ideas  remain to be further explored.

\section{Real functions.}
\label{real}
\label{anall}

So far all objects of study have been discrete and spaces
compact. However, there is a perfectly reasonable
extension of these ideas to continuous objects such as the space of continuous functions on the unit interval.
There has been a lot of work on complexity theory of real functions; see, e.g., Ko ~\cite{Ko}.
In terms of applications, a natural object of study would be online
analysis; analytic processes which run quickly and only use local knowledge
of the precision of the inputs. As we observe below with
natural representations, addition of reals $x,y$ with precision $2^{-n}$ only
needs $x$ and $y$ to within $2^{-(n+2)}$. Integration and other
standard processes have similar commentry, but we leave this to a later paper.
Also there are other online processes on non-compact spaces, such as EX-learning, or the KC theorem discussed earlier.
We also defer discussion of such topics for later papers, and here stick
to analysis.

The main goal of this section is to demonstrate the role of primitive recursion as a useful abstraction.
The content of this section is not technically hard, but
one can easily imagine a much deeper general framework that could emerge from these basic ideas.

Recall that a Cauchy sequence $(r_i)_{i \in \mathbb{N}}$ of rationals is fast if $|r_i - r_{i+1}| < 2^{-i}$, for every $i$.
These are the \emph{names} which represent the space.
 A function $f: [0,1] \rightarrow \mathbb{R}$ is \emph{computable} if
   there is a Turing functional~$\Phi$ such that, for each $x \in [0,1]$ and for every fast Cauchy sequence $\chi$ converging to $x$, the functional~$\Phi$ enumerates a fast Cauchy sequence for $f(x)$  using $\chi$ as an oracle.
In particular, using the terminology, we would be
generating a representation of the function via names of Cauchy sequences
in such a way that it is representation independent.
That is, $(\Phi^\chi(n))_{ n\in \mathbb{N}}$ is a fast Cauchy sequence for $f(x)$.
This in particular means that, on input $(r_i)_{i \in \mathbb{N}}$, the use of $\Phi^{(r_i)_{i \in \mathbb{N}}}(j)$ corresponds to $\delta$ when $\epsilon = 2^{-j+1}$ in the standard $\epsilon$-$\delta$ definition of a continuous function.

It is well-known that Weierstrass approximation theorem is effectivisable in the sense of Turing computability~\cite{PourElRich}.  This means that a function $f: [0,1] \rightarrow \mathbb{R}$ is computable iff  there is a computable sequence of  polynomials $(p_i)_{i \in \mathbb{N}}$ with rational coefficients with the property $${\rm sup}_{x \in [0,1]} |f(x) - p_i(x) | < 2^{-i},$$
for every $i.$

We have seen that the most general definition  of being online for
combinatorial structures involves being $g$-online for some primitive
recursive function $g$. That is, there is a translation between
using $g(n)$ many bits of $\alpha$ to compute $n$ bits of
$f(\alpha)$. We have also seen that for most natural online situations,
we can translate this to a wider tree where
$\alpha'\uh n$ represents $\alpha\uh g(n)$, so we can use  strict (ibT primitive recursive)
procedures. It is not completely clear if this is natural in the setting
of analysis, since we might wish to stick to \emph{standard} representations of the spaces, like $2^\omega$ and $\omega^\omega$, as above.

We first consider the most
general setting where we allow $g$-online for a primitive
recursive $g$, so using $g(n)$ bits to decide the output for length $n$.
We will call this \emph{punctually} computable.
In this case, there are two natural  definitions of what it would mean for such an $f$ to be ``online'' computable in the most general sense of primitive recursion.
The first notion is the most straightforward sub-recursive version of the standard definition.

\begin{defi}
A function $f: [0,1] \rightarrow \mathbb{R}$ is \emph{punctually computable} if
   there is a primitive recursive functional~$\Phi$ such that, for each $x \in [0,1]$ and for every fast Cauchy sequence $\chi$ converging to $x$, the functional~$\Phi$ enumerates a fast Cauchy sequence for $f(x)$  using $\chi$ as an oracle.
\end{defi}

By restricting ourselves to dyadic rationals, we can assume that fast Cauchy sequences come from a compact totally disconnected space of the names of dyadic rationals in $[0,1]$. Thus, Lemma~\ref{lem:ptime} can be applied to ensure that there is no ambiguity in the notion of a primitive recursive functional in this case. In particular, the definition has a natural polynomial-time version which we omit (see \cite{Ko}); the same applies to any natural complexity class which may be of interest.

The second version filters through the theorem of Weierstrass. It views $f$ as a primitive recursive point in the metric space $(C[0,1], \rm sup)$
rather than as a functional.

\begin{defi}
A function $f: [0,1] \rightarrow \mathbb{R}$ is \emph{uniformly punctually computable} if
   there is a primitive recursive function which on input $i$ outputs (the index of) a polynomial $p_i$ with rational coefficients such that
   ${\rm sup}_{x \in [0,1]} |f(x) - p_i(x) | < 2^{-i}$.
\end{defi}
Clearly, there is a natural polynomial-time modification of the definition above which we omit.

\

Every uniformly punctually computable $f$ is punctually computable.
Are these two definitions equivalent?
It is not completely evident why Weierstrass approximation theorem should hold primitively recursively.
Indeed, in the standard Turing computable proof  we would  wait for a cover of $[0,1]$ by $\delta_i$-balls $B_i$
such that $f(B_i)$ has diameter $< \epsilon$, for every $i$. It seems that even when $f$ is punctual
this search could be unbounded.

Nonetheless, the theorem below shows that these definitions are equivalent. This result  is not really new. With some effort its proof can be extracted from~\cite{Ko}, but the book is mainly focused
 on polynomial time and exponential versions of the definitions above. There is much combinatorics specific to complexity theory which
 significantly obscures the idea behind the proof.
Primitive recursion strips away complex counting combinatorics thus clarifying the idea.

\begin{thm}\label{thm:anal}
Every punctually computable $f: [0,1] \rightarrow \mathbb{R}$ is uniformly punctually computable.
\end{thm}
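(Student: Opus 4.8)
The plan is to extract from the punctual functional $\Phi$ a \emph{primitive recursive modulus of uniform continuity} for $f$, and then feed that modulus into a primitive recursive version of the Bernstein polynomial construction. The entire point is that the unbounded search of the classical argument gets replaced by a single appeal to the robustness lemma. First I would invoke Lemma~\ref{lem:ptime}. As noted just above the statement of the theorem, once we restrict to dyadic names the space of fast Cauchy sequences for points of $[0,1]$ is compact and primitively recursively branching, so the primitive recursive functional $\Phi$ has a primitive recursive time function, hence a primitive recursive \emph{use} bound $u$: the $n$-th output term $\Phi^{\chi}(n)$ depends only on $\chi \upharpoonright u(n)$. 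Since $\Phi^{\chi}(n)$ lies within $2^{-n+1}$ of $f(x)$ whenever $\chi \to x$, and the first $u(n)$ terms of a fast Cauchy sequence already pin $x$ down to within $2^{-u(n)+1}$, this uniform use bound is exactly the ingredient missing in the classical proof.

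Next I would convert the uniform use bound into uniform continuity. The key observation is that any two points $x, y \in [0,1]$ with $|x - y|$ sufficiently small admit fast Cauchy names $\chi, \psi$ sharing a common prefix of length $u(n)$: if a dyadic $q$ lies within $2^{-u(n)+1}$ of both $x$ and $y$ (which happens once $|x-y| < 2^{-u(n)+2}$, by density of the dyadics and the fact that the two balls of radius $2^{-u(n)+1}$ then overlap), then setting the first $u(n)$ terms of each name equal to the constant $q$ and continuing toward $x$, respectively $y$, yields valid fast Cauchy names. For such $x, y$ the use bound forces $\Phi^{\chi}(n) = \Phi^{\psi}(n)$, whence $|f(x) - f(y)| \le 2^{-n+2}$. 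Thus $\omega(m) = u(m+2)$ serves, up to harmless additive constants, as a \emph{primitive recursive} modulus: $|x - y| \le 2^{-\omega(m)}$ implies $|f(x) - f(y)| \le 2^{-m}$. I expect the main conceptual content to sit here: everything in the classical Weierstrass proof that required compactness of $[0,1]$ is absorbed into Lemma~\ref{lem:ptime} together with this elementary common-prefix construction, and this is precisely the step where one must take into account punctuality rather than mere Turing computability.

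Finally I would run Bernstein polynomials primitively recursively. Using the classical estimate $\|f - B_m f\|_\infty \le C\,\omega_f(m^{-1/2})$ for an absolute constant $C$, where $B_m f(x) = \sum_{k=0}^{m} f(k/m)\binom{m}{k} x^{k} (1-x)^{m-k}$, the primitive recursive modulus $\omega$ lets me compute primitively recursively an $m = m(i)$ with $\|f - B_m f\|_\infty < 2^{-i-1}$; concretely $m(i) = 4^{\omega(t(i))}$ for a suitable primitive recursive $t(i)$ works, since $\omega_f(m^{-1/2}) \le 2^{-t}$ as soon as $m \ge 4^{\omega(t)}$. I then compute, by applying $\Phi$ to the trivial constant fast Cauchy name of the dyadic $k/m$, rationals $\tilde q_k$ with $|\tilde q_k - f(k/m)| < 2^{-i-1}$, and output the rational polynomial $p_i(x) = \sum_{k=0}^{m} \tilde q_k \binom{m}{k} x^{k} (1-x)^{m-k}$. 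Since the Bernstein weights are nonnegative on $[0,1]$ and sum to $1$, rounding the coefficients costs at most $2^{-i-1}$ in sup-norm, so $\sup_{x \in [0,1]} |f(x) - p_i(x)| < 2^{-i}$, and the map $i \mapsto p_i$ is primitive recursive. This exhibits $f$ as uniformly punctually computable, completing the argument.
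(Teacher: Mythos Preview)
Your proof is correct and shares the paper's central move: both arguments hinge on Lemma~\ref{lem:ptime} applied to the primitively recursively branching space of dyadic fast Cauchy names, which converts the primitive recursive functional $\Phi$ into one with a primitive recursive use bound and thereby replaces the unbounded compactness search of the classical Weierstrass argument. Where you diverge is in the passage from that uniform bound to the polynomials. The paper's sketch reads off an open cover of $[0,1]$ by intervals on which $f$ oscillates little, builds a piecewise linear interpolant through the computed values at the interval centres, and then appeals to an unspecified primitive recursive ``smoothening'' to pass to a polynomial. You instead package the use bound as a primitive recursive modulus of uniform continuity via the common-prefix name construction, and then feed that modulus directly into the Bernstein operator together with the standard $\omega_f(m^{-1/2})$ estimate. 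Your route is somewhat more explicit at the final step --- the Bernstein machinery gives a concrete polynomial with a clean error analysis, and the convexity of the Bernstein weights handles the coefficient rounding transparently --- whereas the paper's route stays closer to the tree-of-computations picture from Lemma~\ref{lem:ptime} and is slightly more geometric. Neither approach buys anything the other does not; they are two standard finishes once the primitive recursive modulus is in hand. One small caution: depending on the exact encoding of dyadic names the ``constant'' sequence $q,q,\ldots$ may not literally be a valid name, but since $m(i)$ is a power of $2$ the points $k/m$ are dyadic and admit canonical names on which $\Phi$ can be evaluated primitively recursively, so this is only a cosmetic adjustment.
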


 \begin{proof}[Proof sketch] The idea here is similar to that in the proof  of Lemma~\ref{lem:ptime}.
Fix $n$ and consider the functional $\Psi^x_n = \Phi^x(n)$ which uniformly primitively recursively outputs the first few bits of $f(x)$ up to error $2^{-n}$, for any input $x$. Since $\Psi_n$ is given a primitive recursive scheme (with parameter $n$), we can work by induction on the complexity of the scheme and emulate all its possible computations at once, as in Lemma~\ref{lem:ptime}. Since the space of dyadic presentations of rationals is
primitively recursively compact, this will lead to a primitively recursively branching tree of possible computations whose height is determined by the syntactical  complexity of the primitive recursive scheme.
By the choice of $\Psi_n$, one of these computations must work for an arbitrary  $x \in [0,1]$.
Thus, we have primitively recursively  calculated an open cover $[0,1]$ by basic open intervals $J_1, \ldots, J_k$, such that whenever $x, y \in J_i$ we have $|f(x)-f(y)| < 2^{-n+1}$. If $z_i$ is the center of $J_i$, then define (the graph of a) piecewise linear function $h_n$ by connecting
points $(z_i, \Psi_n^{z_i} ) $ and $(z_{i+1}, \Phi_n^{z_{i+1}} ) $, $i = 1, \ldots, n$-$1$. Note that the values of the $\Phi_n^{z_{i}}$ have already been calculated. Since the intervals are overlapping, this piecewise linear function $h_n$ approximates $f$ with precision $2^{-n+2}$. We can primitively recursively smoothen $h_n$ by replacing it with a polynomial $p_n$ such that ${\rm sup}_{x \in [0,1]} |p_n (x) - f(x)| < 2^{-n+3}$.  \end{proof}

See Chapter 8 of \cite{Ko} for a detailed analysis of the polynomial-time versions of Weierstrass approximation theorem.
Recall that in the proof sketch above we generated the tree of possible computations. For a polynomial-time operator this tree may be exponentially large at worst. This difficulty cannot be circumvented and the polynomial-time analogy of the theorem above \emph{fails} as explained in great detail in~\cite{Ko}.

We see that punctual analysis fits somewhere in-between computable analysis and polynomial-time analysis, and there is likely much   depth in the subject. Such a theory could provide us with a stronger technical link between computable and feasible analysis. Some basic foundations of elementary primitive recursive analysis was established in the 1950s and the 1960s;
we cite \cite{Sp1,Sp2,Cleave} and the book \cite{Good}. Nonetheless, is seems there has been no recent dedicated study of primitive recursive continuous functions.  Primitive recursive presentations of analytic separable spaces (such as, say, the Urysohn space) have not been systematically studied either.

\begin{prob} Develop primitive recursive (``punctual'') analysis.
\end{prob}

Now in the case that we want to look at the strictly online
model, we are stuck with using, for instance, the bit representation of
a real $x$, and would be working, for example, with
$2^\omega$.
Then to compute $f(x)$ with precision $2^{-n}$ we would need
$x\uh n$.
We might ask for delay $k$ so might use $2^{-(n+k)}.$
Now in this case, we see that, for example
addition is online (on $2^\omega\times 2^\omega$) with delay 2,
and if $f$ is a given online computable function
which is bounded then $\int_0^xf(x)dx$ would also be
online computable with delay 2.
We remark that this model would seem to be one emulating
classical numerical analysis.
We cite  \cite{Wei91} and Chapter 7 of \cite{Wei00} for some closely related results in computable analysis, and see
\cite{Sch, Tri, Mul} for results on online arithmetic in computer science.


\end{document}